\newcommand{\op}{\operatorname}
\newcommand{\tb}{\textit{tb}}
\newcommand{\<}{\langle}
\renewcommand{\>}{\rangle}
\newcommand{\Z}{\mathbb{Z}}
\newcommand{\R}{\mathbb{R}}
\newcommand{\A}{\mathcal{A}}
\newcommand{\st}{J^1(S^1)}
\def\fig#1{\raisebox{-2.2ex}{\includegraphics[height=5.2ex]{#1}}}
\def\figg#1{\raisebox{-2.1ex}{\includegraphics[width=5.2ex]{#1}}}
\def\figgg#1{\raisebox{-.3ex}{\includegraphics[width=5.2ex]{#1}}}
\theoremstyle{plain}
\newtheorem{theorem}{Theorem}[section]
\newtheorem{corollary}[theorem]{Corollary}
\newtheorem{lemma}[theorem]{Lemma}
\newtheorem{proposition}[theorem]{Proposition}
\theoremstyle{definition}
\newtheorem{definition}[theorem]{Definition}
\newtheorem{remark}[theorem]{Remark}
\theoremstyle{remark}
\newtheorem*{example}{Example}
\numberwithin{equation}{section}
\title[Invariants of Legendrian solid torus links]{Generalized normal rulings and invariants of Legendrian solid torus links}
\author{Mikhail Lavrov}
\address{Carnegie Mellon University, Pittsburgh, PA 15213}
\email{mlavrov@andrew.cmu.edu}
\author{Dan Rutherford}
\address{University of Arkansas, Fayetteville, AR 72701}
\email{drruther@uark.edu}
\begin{document}

\begin{abstract}  For Legendrian links in the $1$-jet space of $S^1$ we show that the $1$-graded ruling polynomial may be recovered from the Kauffman skein module.  For such links a generalization of the notion of normal ruling is introduced.  We show that the existence of such a generalized normal ruling is equivalent to sharpness of the Kauffman polynomial estimate for the Thurston-Bennequin number as well as to the existence of an ungraded augmentation of the Chekanov-Eliashberg DGA.  Parallel results involving the HOMFLY-PT polynomial and $2$-graded generalized normal rulings are established.
\end{abstract}

\maketitle

\section{Introduction}

In $\R^3$ interesting connections exist between the $2$-variable knot polynomials and invariants of Legendrian knots.  With respect to the standard contact structure on $\R^3$, Fuchs and Tabachnikov \cite{FT} showed that an upper bound for the Thurston-Bennequin number arises from the Kauffman and HOMFLY-PT knot polynomials.  Furthermore, when this estimate is sharp some non-classical invariants exhibit nice properties.  Specifically, combining results from \cite{F}, \cite{FI}, \cite{S}, and \cite{R1} we have:

\begin{theorem} \label{thm:Review}
For a Legendrian link $L \subset \R^3$ the following three statements are all equivalent:

\begin{enumerate}

\item The estimate $\tb(L) \leq -\deg_a F_L$ (resp. $\tb(L) \leq -\deg_a P_L$) is sharp where $F_L, P_L \in \Z[a^{\pm 1}, z^{\pm 1}]$ denote the Kauffman and HOMFLY-PT polynomials.  

\item A front diagram for $L$ has a $1$-graded (resp. $2$-graded) normal ruling.

\item The Chekanov-Eliashberg DGA of $L$ has a $1$-graded (resp. $2$-graded) augmentation.

\end{enumerate}
\end{theorem}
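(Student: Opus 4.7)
Since Theorem~\ref{thm:Review} collects results spread across \cite{F}, \cite{FI}, \cite{S}, and \cite{R1}, the strategy is to establish the cycle $(1) \Leftrightarrow (2) \Leftrightarrow (3)$ by isolating which implication each reference supplies. I would carry out the Kauffman / $1$-graded case in detail and note that the HOMFLY-PT / $2$-graded case follows by identical arguments with $1$-graded rulings and augmentations replaced by their $2$-graded analogues.

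For $(1) \Leftrightarrow (2)$, I would follow Rutherford \cite{R1}. The core step is to identify the top $a$-degree part of $F_L$ combinatorially as a ruling polynomial of the form $\sum_\rho z^{r(\rho)}$, where $\rho$ ranges over $1$-graded normal rulings of a front diagram for $L$ and $r(\rho)$ is a non-negative integer invariant of the ruling. This can be proved by checking that both sides satisfy a common skein-type recursion together with matching initial data on standard Legendrian unknots. Sharpness of the Fuchs--Tabachnikov estimate $\tb(L) \leq -\deg_a F_L$ then translates into the statement that this ruling polynomial is not identically zero, which is precisely the existence of at least one normal ruling.

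For $(2) \Rightarrow (3)$, I would appeal to the explicit construction of Fuchs \cite{F}: given a $1$-graded normal ruling $\rho$, define a map $\varepsilon_\rho$ on Reeb chord generators of the Chekanov--Eliashberg DGA by sending each switched crossing to $1$ and all other generators to $0$. The local combinatorial conditions defining normality of $\rho$ at each crossing are exactly what is needed for $\varepsilon_\rho \circ \partial = 0$ on generators, so $\varepsilon_\rho$ extends multiplicatively to a DGA augmentation.

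For $(3) \Rightarrow (2)$, I would invoke the arguments of Fuchs--Ishkhanov \cite{FI} and Sabloff \cite{S}, which recover a normal ruling from an arbitrary augmentation by inspecting how $\varepsilon$ interacts with the DGA differential at each crossing and using the resulting data to pair strands consistently across the front. I expect this to be the main obstacle, since here one must extract global combinatorial information (a coherent pairing of strands over the base) from the purely algebraic data packaged in $\varepsilon$; the grading hypothesis is what ultimately ensures that the pairing produced is well-defined as a $1$- (or $2$-)graded normal ruling.
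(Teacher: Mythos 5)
The paper offers no proof of this theorem at all --- it is stated as a synthesis of results from \cite{F}, \cite{FI}, \cite{S}, and \cite{R1} --- and your proposal assigns each implication to the correct source and sketches the right mechanism in each case (skein-theoretic identification of the top $a$-coefficient of $F_L$ with the ruling polynomial for $(1)\Leftrightarrow(2)$, Fuchs's explicit augmentation from a ruling for $(2)\Rightarrow(3)$, and the Fuchs--Ishkhanov/Sabloff extraction of a ruling from an augmentation for $(3)\Rightarrow(2)$), so this is essentially the paper's approach. Two minor inaccuracies, both absorbed by your deferral to the references: the exponent in the ruling polynomial is the number of switches minus the number of right cusps and so can be negative, and Fuchs's construction of $\varepsilon_\rho$ requires first putting the diagram in a normalized form and augmenting certain auxiliary generators rather than literally sending only the switched crossings to $1$ (compare the splash construction the present paper uses in Section 5, where the reverse implication is instead derived from Barannikov's normal forms).
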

\noindent In addition, the conditions of Theorem \ref{thm:Review} are necessary in order for $L$ to have a linear at infinity generating family.  

In this article, we establish analogous results for Legendrian knots in the $1$-jet space of the circle, $J^1(S^1)$.  The manifold $J^1(S^1)$ is topologically an open solid torus and carries a standard contact structure.  Legendrian knots in $J^1(S^1)$ have attracted a fair amount of attention in the literature; see for instance \cite{DG}, \cite{NgTr}, \cite{T1}.  The $1$-jet space setting comes with convenient projections from which Legendrian knots may be presented via front or Lagrangian diagrams and Legendrian isotopy may be described in a combinatorial manner.  In addition, $1$-jet spaces provide a natural setting for the use of generating families.

A convenient formal way to define a normal ruling, $\rho$, of $L$ is as a family of fixed point free involutions of the strands of the front diagram of $L$ subject to many restrictions.  At least locally, this may be viewed as a decomposition of the front diagram into pairs of paths.  Chekanov and Pushkar introduced normal rulings in \cite{ChP}--albeit with different terminology--as well as related Legendrian isotopy invariants  which have become known as ruling polynomials.  
In connection with augmentations, Fuchs independently defined normal rulings of knots in $\R^3$ and, in the case of the Kauffman polynomial, already conjectured the equivalence of (1) and (2) in \cite{F}.  This conjecture was verified in \cite{R1} where it was shown that in fact the $1$-graded and $2$-graded ruling polynomials appear as coefficients of the Kauffman and HOMFLY-PT polynomials respectively.  

Relationships between the Kauffman/HOMFLY-PT invariants and Legendrians knots in $J^1(S^1)$ have already begun to be studied, and several factors make the situation more interesting.  For instance, the HOMFLY-PT polynomial, $P_L$, of a solid torus link, $L$, belongs to a polynomial algebra over $R = \Z[a^{\pm 1}, z^{\pm 1}]$ with a countably infinite number of generators $A_k, k \in \Z\setminus \{0\}$; the Kauffman polynomial has a similar form.  Chmutov and Goryunov \cite{CG} proved Thurston-Bennequin number estimates analogous to those appearing in (1) of Theorem \ref{thm:Review} using these many variable Kauffman and HOMFLY-PT polynomials.  In the case of the HOMFLY-PT polynomial, it was shown in \cite{R2} that the $2$-graded ruling polynomial can be recovered from the HOMFLY-PT polynomial,  but this requires first specializing via an $R$-module homomorphism $R[A_{\pm 1}, A_{\pm 2}, \ldots] \rightarrow R$.  In the present work we develop analogous results involving the $1$-graded ruling polynomial and the Kauffman skein module.  (See Theorems \ref{thm:front} and \ref{thm:spec}.)  

The need to specialize the Kauffman and HOMFLY-PT invariants in order to recover the ruling polynomials has an interesting consequence.  There are many solid torus links where the Kauffman or HOMFLY-PT polynomial estimate is sharp, yet the corresponding ruling polynomial vanishes.  As a result, for Legendrians in $J^1(S^1)$ some adjustment is required to statement (2) of Theorem \ref{thm:Review}.  For this purpose, we introduce a quite natural notion of {\it generalized normal ruling} where the fixed point free condition is relaxed.  Our main result is the following analog of Theorem \ref{thm:Review}:

\begin{theorem} \label{the:Main}
 Let $L \subset \st$ be a Legendrian link.   

\begin{enumerate}

\item Then, the estimate $\tb(L) \leq -\deg_a F_L$ (resp. $\tb(L) \leq -\deg_a P_L$) is sharp if and only if $L$ has a $1$-graded (resp. $2$-graded) generalized normal ruling.  

\item Suppose $L$ has been assigned a $\Z/p$-valued Maslov potential. Then, the Chekanov-Eliashberg DGA of $L$ has a $p$-graded augmentation if and only if a front diagram for $L$ admits a $p$-graded generalized normal ruling.  
\end{enumerate}
\end{theorem}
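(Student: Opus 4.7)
The strategy is to treat the two parts of the theorem with quite different machinery. Part (1) should follow by combining two inputs: the Chmutov-Goryunov Thurston-Bennequin inequalities $\tb(L)\le -\deg_a F_L$ and $\tb(L)\le -\deg_a P_L$ from \cite{CG}, together with the identification, carried out earlier in the paper via Theorems \ref{thm:front} and \ref{thm:spec}, of the leading $a$-coefficient of $F_L$ (resp.~$P_L$) in the Kauffman (resp.~HOMFLY-PT) skein module with a generating function indexed by generalized normal rulings. Part (2) is an augmentation-ruling correspondence, and will be established by extending the approach of Fuchs \cite{F}, Ng-Sabloff, and Sabloff \cite{S} from $\R^3$ to $J^1(S^1)$.

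For Part (1), the crux is positivity: the $p$-graded generalized ruling polynomial is a sum over generalized normal rulings $\rho$ of monomials in $z$ with positive coefficients, so it vanishes if and only if no generalized normal ruling exists. By Theorem \ref{thm:spec}, this polynomial is precisely the top $a$-coefficient of $F_L$ (resp.~$P_L$) read off in an appropriate way from the skein module, so the top coefficient is nonzero iff a generalized normal ruling exists. Combined with the Chmutov-Goryunov inequality, this is equivalent to sharpness of the Thurston-Bennequin estimate. The crucial improvement over \cite{R2} is that the generalized rulings see the full leading coefficient of the many-variable invariant, so unlike in the ordinary ruling case the presence of a ruling is not obscured by the specialization.

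For Part (2), the forward direction proceeds by assigning to a $p$-graded generalized normal ruling $\rho$ an augmentation $\epsilon_\rho$ of the Chekanov-Eliashberg DGA. Following the $\R^3$ model, set $\epsilon_\rho(c)=1$ at Reeb chords corresponding to switches of $\rho$ and at certain right cusps, and extend by requiring $\epsilon_\rho=0$ on all other ordinary crossings and cusps. What is new in the solid-torus setting is that for each strand that is a fixed point of the involution $\rho$, a specific subset of the ``wrap-around'' generators of the DGA (those arising from Reeb chords that wind nontrivially around the core $S^1$) must be augmented; this is precisely the added data of a \emph{generalized} ruling. The verification $\epsilon_\rho\circ\partial = 0$ is then a local disk count at each generator, matching contributions to $\epsilon_\rho(\partial c)$ in pairs according to the standard swap of a switch and its companion returns.

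The converse, and in my view the hardest step, is to recover a generalized normal ruling from a $p$-graded augmentation $\epsilon$. In $\R^3$ one proceeds by a left-to-right sweep of the front, using $\epsilon$ to decide at each crossing whether it is a switch. In $J^1(S^1)$ the front lives on an annulus, so this sweep has to close up consistently around the cutting fiber, and one must simultaneously decide, from $\epsilon$'s values on the winding generators, which strands become fixed points of the involution. The main obstacle is verifying that the resulting data actually satisfies the normality conditions at every crossing and is globally consistent: this amounts to a monodromy computation at the cutting fiber, showing that the involution one reads off on the left of the fiber matches the one on the right, together with a careful case analysis of how winding Reeb chords can enter $\partial c$ for ordinary crossings. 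Once this case analysis is in place, the bijectivity of $\rho \leftrightarrow \epsilon_\rho$ and hence both directions of (2) follow.
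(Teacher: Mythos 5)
Your Part (1) argument has a genuine gap, and it sits exactly at the point the paper's Section 4 is designed to handle. You assert that the leading $a$-coefficient of $F_L$ in the skein module is identified (via Theorems \ref{thm:front} and \ref{thm:spec}) with a positive generating function over generalized normal rulings, so that nonvanishing of the top coefficient is equivalent to existence of a generalized ruling. No such identification exists, and it cannot: the number of generalized normal rulings is not a Legendrian isotopy invariant (Remark \ref{rem:GNR}(iii)), whereas the top coefficient of $F_L$ is. What Theorem \ref{thm:spec} actually says is that the \emph{ordinary} (fixed-point-free) ruling polynomial $R^1_L$ equals the coefficient of $a^{-\tb(L)}$ in the \emph{specialized} polynomial $\widehat{F}_L=\Psi(F_L)$, where $\Psi$ is a non-multiplicative $R$-module map sending $A_\mu\mapsto R^1_{A_\mu}$. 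A nonzero top coefficient $\sum_\mu p_\mu(z)A_\mu$ of $F_L$ can perfectly well be killed by $\Psi$ (the $p_\mu$ have negative coefficients and cancellation occurs), which is precisely why sharpness of the estimate does not imply $R^1_L\neq 0$. The paper's route is: Lemma \ref{lem:GNRiff} converts ``$L$ has a generalized normal ruling'' into ``$L\cdot A_\lambda$ has an ordinary normal ruling for some partition $\lambda$,'' and Theorem \ref{thm:sharp} then shows that if $\sum_\mu p_\mu A_\mu\neq 0$ one can choose $\lambda$ so that $\sum_\mu p_\mu(z)R^1_{A_\mu A_\lambda}(z)\neq 0$; this last step is a nontrivial linear-independence argument (counting switchless rulings $C(\mu\cdot\lambda)$, tensor-decomposing, and reducing to a Vandermonde determinant). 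Your positivity claim replaces this entire argument with an identification that is false, so the hard direction of (1) is not proved.

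For Part (2) your outline is also materially different from the paper's and is incomplete where it matters. The paper works with a resolved front with splashes, so that at each fiber $x_m$ the augmented matrix $\varepsilon(Y_m)$ is a square-zero strictly upper-triangular matrix whose Barannikov involution $\tau_m$ \emph{canonically} defines the ruling; consistency around the annulus is automatic since $Y_0=Y_M$, and the normality conditions (including the new ones at fixed-point strands) are exactly the content of Proposition \ref{prop:BLemma}. Your proposed left-to-right sweep with a monodromy check and ``a careful case analysis'' is the genuinely hard part of your plan and is left undone; moreover, the bijectivity of $\rho\leftrightarrow\epsilon_\rho$ you invoke at the end is neither needed nor true in general (distinct augmentations can induce the same ruling). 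As written, neither direction of the theorem is established by the proposal.
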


\begin{remark}
Aside from allowing the more general $p$-graded condition in (2), it is natural to organize the three statements into these two equivalences.  Even in $\R^3$, the authors do not know of any proof of an implication between the statements about the knot polynomial estimates and existence of augmentations which is able to avoid using normal rulings.  There are settings, for instance certain contact lens spaces, where Legendrian contact homology \cite{Li} and  HOMFLY-PT polynomial estimates (\cite{C1}, \cite{C2}) for $\tb$ have been established while an appropriate notion of normal ruling has yet to be formulated.  For this reason, establishing a more direct link between Bennequin type inequalities and augmentations could prove interesting.  
\end{remark}

\subsection{Organization}

The article is arranged as follows:  In Section 2, we provide the necessary background about normal rulings and the Kauffman and HOMFLY-PT invariants and also introduce generalized normal rulings.  Section 3  runs parallel to the results on the HOMFLY-PT skein module and $2$-graded rulings from \cite{R2}.  We show how to recover the $1$-graded ruling polynomial from an appropriate specialization of the Kauffman skein module.  A natural basis for the Kauffman skein module is indexed by partitions, and for this basis we provide an explicit formula for the specialization.  In Section 4 we prove (1) of Theorem \ref{the:Main} by combining the results of Section 3 (and \cite{R2} for the HOMFLY-PT case) with a linear independence argument.  

The final section of the article deals with part (2) of Theorem \ref{the:Main}.  For the forward implication we base all of our arguments on the linear algebraic results of Barannikov \cite{B} from which the reason behind the normality conditions, with or without fixed points, becomes clear.  

\subsection{Acknowledgements}  This work was initiated through the PRUV program at Duke University.  We thank David Kraines for supervising the program and encouraging our participation.  Also, we thank Lenny Ng for his interest in the project.  
A portion of the writing was carried out while the second author was a visitor at the Max Planck Institute for Mathematics in Bonn, and it is a pleasure to acknowledge MPIM for their hospitality.  The first author received support from NSF CAREER grant DMS-0846346.  
    
\section{Background on Legendrian solid torus links}

We assume familiarity with basic concepts about Legendrian knots such as {\it front projections}, {\it Legendrian Reidemeister moves}, {\it Thurston Bennequin number}, and {\it rotation number} at least for knots in $\R^3$.  See, for instance, \cite{G}, and also note that \cite{R2} contains an alternate discussion of the case of Legendrian knots in $J^1(S^1)$.

We view the $1$-jet space of the circle, $J^1(S^1)$, as $S^1 \times \R^2$ equipped with the contact structure $\xi = \ker(dz - y\, dx)$ where $x$ is a circle-valued coordinate.  We occasionally refer to a (Legendrian) link $L \subset J^1(S^1)$ as a {\it (Legendrian) solid torus link}.  The front projection of a Legendrian solid torus link consists of some number of closed curves in the $xz$-annulus which we view as $[0,1]\times \R$ with the identification $(0, z)\sim (1,z)$.  Generically, front projections are immersed and embedded except at semi-cubical cusps and transverse double points, and two such projections represent Legendrian isotopic links if and only if they are related by a sequence of Legendrian Reidemeister moves. 

We make the convention of extending the Thurston-Bennequin number to homologically non-trivial links by using the front projection formula
\[
\tb(L) = w(L) - c(L)
\]
where $w(L)$ denotes the writhe of $L$ (a signed sum of crossings) and $c(L)$ is half the number of cusps of $L$. 

Similarly, for a Legendrian knot $L \subset J^1(S^1)$ we will define the rotation number as
\[
r(L) = \frac{1}{2}(d(L) - u(L) )
\]
where $d(L)$ denotes the number of downward oriented cusps and $u(L)$ the number of upward oriented cusps.  

\subsection{Products of basic fronts}

Given two annular front diagrams, $K$ and $L$, we define the product, $K\cdot L$, by stacking $K$ above $L$.  In contrast to the case of smooth knot diagrams, this product is non-commutative as the Legendrian isotopy types of $K\cdot L$ and $L \cdot K$ will not agree in general; see \cite{T1} and \cite{R2}.

In this article the {\it basic fronts}, $A_m$, will play an important role.  Given $m \in \Z_{>0}$, $A_m$ is the front diagram that winds $m$ times around the annulus with $m-1$ crossings and no cusps; see Figure \ref{fig:BFront}.  When it is necessary to pay attention to orientations, for $m>0$, we will use $A_m$ (resp. $A_{-m}$) for the basic front oriented in the direction of the positive (resp. negative) $x$-axis.

\begin{figure} 
\centerline{\includegraphics{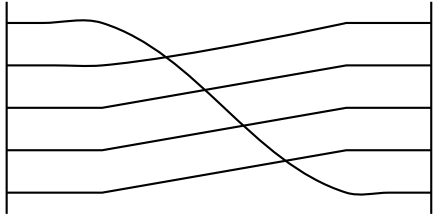}}
\caption{The basic front $A_5$.}
\label{fig:BFront}
\end{figure}

Given an $\ell$-tuple of positive integers $\lambda = (\lambda_1, \ldots, \lambda_\ell)$ we write $A_\lambda = A_{\lambda_1} A_{\lambda_2} \cdots A_{\lambda_\ell}$ for the product of basic fronts and $A_{-\lambda}$ for the product with all orientations reversed.

\subsection{Kauffman polynomial in $J^1(S^1)$}  \label{sec:KP} We now describe a generalization from \cite{Tu} of the Kauffman polynomial to smooth links (not necessarily Legendrian) in the solid torus.  In practice, this invariant is computed by reducing a link diagram to products of basic fronts via skein relations.  Whenever appropriate, we will view a front diagram of a Legendrian link as a smooth link diagram by placing the strand with lesser slope on top at crossings and smoothing cusps.  

Let $\mathcal{D}$ denote the set of regular isotopy classes of unoriented link diagrams in the annulus.  That is, we consider link diagrams up to the equivalence generated by Type II and Type III Reidemeister moves.   Using the coefficient ring $R = \Z[a^{\pm 1}, z^{\pm 1}]$ we define the Kauffman skein module $\mathcal{F}$ as the quotient of  the free $R$-module,  $R\mathcal{D}$, by the sub-module generated by the Kauffman skein relations,
\begin{equation} \label{eq:SR1}
\figg{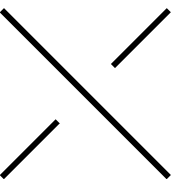} - \figg{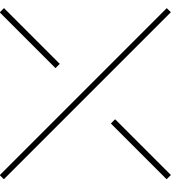} = z \left(\, \figg{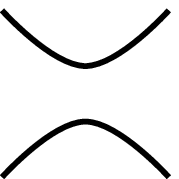} - \figg{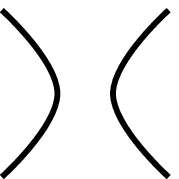} \,\right),
\end{equation}  
\begin{equation} \label{eq:SR2}
\figg{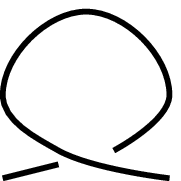} = a \left( \figgg{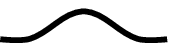} \right), \quad \figg{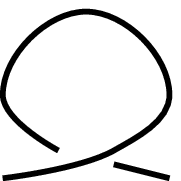} = a^{-1} \left( \figgg{KSR7.eps} \right), \mbox{and}
\end{equation}  
\begin{equation} \label{eq:SR3}
\figg{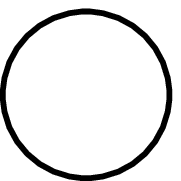} \bigsqcup L = \left( \frac{a-a^{-1}}{z} +1 \right) \cdot L. 
\end{equation}

The product of diagrams gives a well defined product on $\mathcal{F}$ which is commutative as we now consider diagrams of smooth links rather than front diagrams of Legendrian links.  Turaev showed in \cite{Tu} that $\mathcal{F}$ is a polynomial $R$-algebra in the basic fronts.  Thus, to a link diagram $L$ we may associate a polynomial $D_L(a,z; A_1, A_2,\ldots)$ according to
\[
\begin{array}{ccc} \mathcal{F} & \cong & R[A_1, A_2, \ldots] 
\\

									[L] & \leftrightarrow & D_L  
\end{array}.
\]
The {\it Kauffman polynomial} of an oriented link $L \subset J^1(S^1)$ is then defined by the normalization $F_L = a^{-w(L)} D_L$ where $w(L)$ denotes the writhe of $L$.

Chmutov and Goryunov \cite{CG} proved that for any Legendrian link $L \subset J^1(S^1)$,
\begin{equation}\label{eq:CGEst}
\tb(L) \leq -\deg_a F_L.
\end{equation}
While \cite{CG} uses a different projection annulus for computing $F_L$, a proof of (\ref{eq:CGEst}) matching our conventions for $F_L$ may be given precisely as in the case of the HOMFLY-PT polynomial addressed in Section 6.2 of \cite{R2}.

\begin{remark} \label{rem:basis}
(i) Recall that a possibly empty sequence of positive integers, $\lambda = (\lambda_1, \ldots, \lambda_\ell)$, is called a {\it partition} if $\lambda_1 \geq \ldots \geq \lambda_\ell$.  The integers $\lambda_i$ are called the {\it parts} of $\lambda$ and we sometimes use the notation $\lambda = 1^{j_1} 2^{j_2} \cdots n^{j_n}$ to indicate that $\lambda$ is the partition with $j_r$ parts equal to $r$, $r= 1, \ldots, n$.  As it will be useful later, we note that the collection of products $A_\lambda$ with $\lambda$ a partition forms an $R$-module basis for $\mathcal{F}$.  

(ii) The HOMFLY-PT skein module is defined in a similar manner using oriented link diagrams and an appropriate modification of the skein relations (\ref{eq:SR1})-(\ref{eq:SR3}) (see, for instance \cite{R2}).  The result is a polynomial algebra generated by the oriented basic fronts \cite{Tu}.  For a given oriented link $L \subset J^1(S^1)$ we denote the corresponding HOMFLY-PT polynomial as 
\[
P_L \in R[A_{\pm 1}, A_{\pm 2}, \ldots].
\]
\end{remark}

\subsection{Normal rulings in $J^1(S^1)$}
Let $L \subset S^1 \times \R$ be the front projection of a Legendrian link in the solid torus satisfying the additional assumption that all crossings and cusps have distinct $x$-coordinates none of which equals $0$.  A normal ruling can be viewed locally as a decomposition of $L$ into pairs of paths.  We make some notational preparation before giving the formal definition. 

Denote by $\Sigma \subset S^1$ those $x$-coordinates which coincide with a crossing or cusp of $L$.  We can write, $\displaystyle S^1 \setminus \Sigma = \bigsqcup_{m=1}^M I_m$ with each $I_m$ an open interval (or all of $S^1$ if $\Sigma = \emptyset$).  Making the convention that $I_0 = I_M$, we assume that the $I_m$ are ordered so that $I_{m-1}$ appears immediately to the left of $I_{m}$ and $I_M$ contains $x=0$.  On subsets of the form $I_m \times \R$ the front projection $L$ consists of some number of non-intersecting components which project homeomorphically onto $I_m$.  We refer to these components as the {\it strands} of $L$ above $I_m$, and we number them from \emph{top to bottom} as $1, \ldots, N(m)$.  Finally, for each $m = 1, \ldots, M$ we choose a point $x_m \in I_m$.

\begin{definition} \label{def:NR}
A {\it normal ruling} of the front diagram $L$ is a sequence $\rho = (\rho_1, \ldots, \rho_M)$ of involutions 
\[
\rho_m: \{1, \ldots, N(m) \} \rightarrow \{1, \ldots, N(m) \}, \quad (\rho_m)^2 = \mathit{id}
\]
satisfying the following restrictions:

\begin{enumerate}

\item  Each $\rho_m$ is fixed point free.

\item  If the strands above $I_{m}$ labeled $k$ and $k+1$ meet at a left cusp in the interval $(x_{m-1}, x_m)$, then $\rho_{m}(k) = k+1$ and when $n \notin \{k, k+1\}$, \[\rho_{m}(n) = \left\{\begin{array}{cr}\rho_{m-1}(n) & \mbox{if } n <k \\
\rho_{m-1}(n-2) & \mbox{if } n > k+1
\end{array}\right..\]

\item A condition symmetric to (2) at right cusps. 

\item If strands above $I_m$ labeled $k$ and $k+1$ meet at a crossing on the interval $(x_{m-1}, x_m)$, then $\rho_{m-1}(k) \neq k+1$ and either
\begin{enumerate}
\item $\rho_{m} = (k \,\, k+1) \circ \rho_{m-1} \circ (k \,\, k+1)$ where $(k \,\, k+1)$ denotes the transposition, or
\item $\rho_{m} = \rho_{m-1}$.
\end{enumerate}
In the second case we refer to the crossing as a {\it switch} of $\rho$.  Finally, we have a requirement at switches that is known as the {\it normality condition}.
\item  If there is a switch on the interval $(x_{m-1}, x_m)$ then one of the following three orderings holds:  
\[
\rho_m(k+1) < \rho_m(k) < k < k+1, \quad \rho_m(k) < k < k+1 < \rho_m(k+1), \, \mbox{or}
\]
\[
 \quad k < k+1 < \rho_m(k+1) < \rho_m(k)
\]

\end{enumerate}
\end{definition}

\begin{remark} \label{rem:AltNR} This definition is a slight variation on those found elsewhere in the literature.  Letting $\pi: S^1 \times \R \rightarrow S^1$ denote the projection, Chekanov and Pushkar defined a normal ruling as a continuous, fixed point free involution of $L\setminus \pi^{-1}(\Sigma)$ which preserves the $x$-coordinate and is subject to some requirements for continuous extension near crossings or cusps as well as a normality condition at switches.  Such an involution is recovered from our definition by viewing the set $\{1, 2, \ldots, N(m)\}$ that $\rho_m$ permutes as the set of strands above $I_m$.

From this perspective, the fixed point free condition causes the $\rho_m$ to divide the strands above $I_m$ into pairs, and in our figures we will present normal rulings by indicating this pairing.  Beginning at $x=0$ and working to the right, one may cover the front diagram with pairs of continuous paths with monotonically increasing $x$-coordinates, so that a given pair of paths corresponds to strands paired by the involutions.  If a path proceeds all the way around the annulus, then it will not necessarily end up where it started.  However, the division of the front diagram into pairs of points at $x=0$ and $x=1$ should match up.

Paired paths are only allowed to meet at common cusp endpoints.  In particular, at any crossing the two paths of the ruling that meet should belong to different pairs and, for values of $x$ near the crossing, each will have a ``companion path'' located somewhere above or below the crossing.  The two paths can either follow the link diagram and cross each other (this corresponds to (4) (a) above) or they may switch strands by each turning a corner at the crossing.  The normality condition provides a restriction on the location of the companion paths near a switch; out of six possible configurations for the switching strands and their companion strands only three are allowed.  See Figure \ref{fig:NormC} for the normality condition and the right half of Figure \ref{fig:Ex} for an example of a normal ruling.
\end{remark}

\begin{figure} 
\centerline{\includegraphics{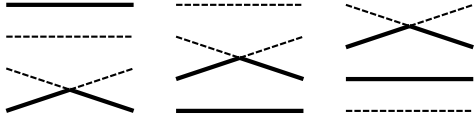}}
\caption{The normality condition.}
\label{fig:NormC}
\end{figure}

\subsection{Maslov potentials and graded normal rulings} \label{sec:MP} Further grading restrictions may be placed on a normal ruling after the introduction of a Maslov potential for $L$.   Let $p$ be a divisor of $2 r(L_i)$ for each component $L_i$ of a Legendrian link $L$.  A $\Z/p$-valued {\it Maslov potential}, $\mu$, for $L$ is a function from $L$ to $\Z/p$ which is constant except at cusp points where it increases by $1$ when moving from the lower strand to the upper strand. 
Note that a chosen orientation provides $L$ with a $\Z/2$-valued Maslov potential by following the convention that strands oriented to the right (resp. left) are assigned the value $0$ (resp. $1$) mod $2$.  

We say that a normal ruling $\rho$ is {\it $p$-graded} with respect to a $\Z/p$-valued Maslov potential $\mu$ if whenever two strands $S_1$ and $S_2$ of $L$ are paired by one of the $\rho_m$ with $S_1$ above $S_2$ we have $\mu(S_1) = \mu(S_2) + 1$.  

\subsection{Ruling polynomials}  Suppose $\mu$ is a $\Z/p$-valued Maslov potential for a Legendrian link $L$.  The $p$-graded ruling polynomial of $L$ with respect to $\mu$ is given by
\[\displaystyle
R^p_{(L,\mu)}(z) = \sum_{\rho} z^{j(\rho)}
\]
where the sum is over all normal rulings of $L$ which are $p$-graded with respect to $\mu$ and 
\[
j(\rho) = \#\mbox{switches} - \#\mbox{right cusps}.
\]

The ruling polynomial does not depend on the choice of Maslov potential when $p=1$; $p=2$ and $L$ is oriented; or $L$ is connected.  In any of these cases we denote the ruling polynomial simply as $R^p_L$.  The ruling polynomials are Legendrian isotopy invariants \cite{ChP}.

\subsection{Generalized normal rulings}  In the following definition the requirements from Definition \ref{def:NR} are relaxed in a manner which is appropriate for Theorem \ref{the:Main} to hold.

\begin{definition} \label{def:GNR} A {\it generalized normal ruling} consists of a sequence of involutions $\rho = (\rho_1, \ldots, \rho_M)$ as in Definition \ref{def:NR} subject to the following modifications:
\begin{enumerate}
\item We remove the requirement that the $\rho_m$ be fixed point free.

\item If a crossing occurs in the interval $(x_{m-1},x_m)$ between the $k$ and $k+1$ strands above $I_{m-1}$ with exactly one of these two strands a fixed point of $\rho_m$, then we  decide if the crossing is a switch precisely as in (4) of Definition \ref{def:NR}.
If the crossing is indeed a switch then we require the additional normality condition that either
\[
\rho_m(k) = k < k+1 < \rho_m(k+1) \quad \mbox{or} \quad \rho_m(k) < k < k+1 = \rho_m(k+1).
\]
(See Figure \ref{fig:GenNormC}.)
\end{enumerate} 
\end{definition}

\begin{figure} 
\centerline{\includegraphics{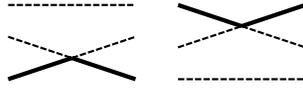} }
\caption{The normality condition for generalized rulings:  The strand pictured in bold is a fixed point of $\rho_m$.}
\label{fig:GenNormC}
\end{figure}

\begin{remark} \label{rem:GNR} (i) If a crossing involving the $k$ and $k+1$ strands occurs on $(x_{m-1}, x_m)$ with both of the crossing strands fixed by the ruling, i.e. $\rho_{m-1}(k) =k$ and $\rho_{m-1}(k+1) =k+1$, then $\rho_{m-1} = (k \, k+1) \circ \rho_{m-1} \circ (k \, k+1)$.  Consequently, we will not consider such crossings to be switches.

(ii) In the presence of an appropriate Maslov potential, we can consider $p$-graded generalized normal rulings precisely as in Section \ref{sec:MP}.

(iii) The number of generalized normal rulings of a Legendrian link is not invariant under Legendrian isotopy.  However, in view of Lemma \ref{lem:GNRiff} below, the polynomials $R^p_{L \cdot A_\lambda}$ serve as some form of substitute for a ``generalized ruling polynomial''.
\end{remark}

For establishing (1) of Theorem \ref{the:Main} we will use the following equivalent characterization of front diagrams that admit generalized rulings.

\begin{lemma}  \label{lem:GNRiff}
A front diagram $L$ has a $1$-graded (resp. $2$-graded) generalized normal ruling if and only if there exists  partitions $\lambda$ and $\mu$ so that $R^1_{L \cdot A_\lambda}(z) \neq 0$ (resp. $R^2_{L \cdot A_\lambda A_{-\mu}}(z) \neq 0$). 
\end{lemma}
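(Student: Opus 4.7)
The plan is to reduce the lemma to an equivalence of existence statements: since $R^p_{L \cdot A_\lambda}(z) = \sum_{\tilde\rho} z^{j(\tilde\rho)}$ (and similarly for $L \cdot A_\lambda A_{-\mu}$) is a sum of monomials with positive integer coefficients, non-vanishing is equivalent to the existence of at least one $p$-graded ordinary normal ruling $\tilde\rho$ of the stacked diagram. So it suffices to establish a correspondence between $p$-graded generalized normal rulings of $L$ and ordinary $p$-graded normal rulings of $L \cdot A_\lambda$ (or $L \cdot A_\lambda A_{-\mu}$) for some partitions $\lambda$ (and $\mu$).

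For the direction $(\Leftarrow)$, given an ordinary normal ruling $\tilde\rho$ of the stacked diagram, I would restrict to $L$ by setting $\rho_m(k) = \tilde\rho_m(k)$ when $\tilde\rho_m(k)$ labels an $L$-strand and $\rho_m(k) = k$ otherwise.  A case analysis at each crossing of $L$, split by how many (zero, one, or two) of the $\tilde\rho$-partners of the crossing strands lie in basic fronts, shows that $\rho$ satisfies Definition \ref{def:GNR}: switches of $\tilde\rho$ yield ordinary switches, generalized switches of Definition \ref{def:GNR}(2), or non-switches between fixed strands (consistent with Remark \ref{rem:GNR}(i)) in the three sub-cases.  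The Maslov potential on $L$-strands is unchanged, so the $p$-grading is inherited.  For the direction $(\Rightarrow)$, I would trace the fixed strands of a given $p$-graded generalized ruling $\rho$ through non-switch crossings (where a fixed path continues along its link component) and through switches involving exactly one fixed strand (where it turns a corner to the adjacent link component), producing a collection of closed loops $C_1, \ldots, C_s$ in the annulus with winding numbers $w_1, \ldots, w_s \geq 1$.  In the $2$-graded case, the identity $\mu(k+1) = \mu(\rho_m(k+1)) + 1$ applied on both sides of each one-fixed switch forces the two link components meeting at the crossing to share a Maslov potential, so each $C_i$ carries a well-defined orientation.  Collect winding numbers of leftward (resp. rightward) loops into $\lambda$ (resp. $\mu$), stack $A_\lambda$ and, when $p = 2$, $A_{-\mu}$ beneath $L$, and extend $\rho$ to a ruling $\tilde\rho$ by pairing the $w_i$ strands of each $C_i$ at every slice with the strands of the corresponding basic-front block $A_{\pm w_i}$ in reversed order (topmost fixed strand of $C_i$ paired with the bottommost strand of the block, and so on), declaring all crossings internal to basic fronts to be pass-throughs.

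The main obstacle is verifying that $\tilde\rho$ is a bona fide normal ruling.  Because every basic-front strand lies strictly below every $L$-strand, each of the two generalized orderings of Definition \ref{def:GNR}(2) at a switch in $L$ is promoted to one of the three allowed orderings of Definition \ref{def:NR}(5) for $\tilde\rho$ (the middle or bottom case, according to which of the two crossing strands was fixed), and the reversed nesting preserves normality when two strands of the same $C_i$ swap heights at a both-fixed crossing or at internal basic-front crossings, which are non-switches by construction.  The basic-front blocks in $A_\lambda A_{-\mu}$ must also be ordered so that an innermost fixed loop of $L$ is paired with an outermost block below; such an ordering exists because distinct fixed loops yield disjoint, nestable pair-sets.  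With such $\tilde\rho$ in hand, the relevant ruling polynomial is a nonzero sum of monomials $z^{j(\tilde\rho)}$ with positive integer coefficients, completing the proof.
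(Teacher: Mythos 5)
Your proposal is correct and follows essentially the same route as the paper's proof: restricting a normal ruling of the stacked diagram to $L$ (turning strands paired with basic fronts into fixed points) for one direction, and tracing the fixed-point strands of a generalized ruling into cuspless loops $C_i$ that get paired with stacked basic fronts $A_{\lambda_i}$ (oriented consistently in the $2$-graded case) for the other. The only minor difference is that you impose a particular nested ordering on the basic-front blocks, whereas the paper observes this is unnecessary since crossings between two fixed strands are never switches (Remark \ref{rem:GNR}), so no normality condition relates distinct pairs $(C_i, A_{\lambda_i})$.
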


\begin{proof}  For simplicity, we treat the $1$-graded case first.  If $R^1_{L \cdot A_\lambda}(z) \neq 0$, then $L\cdot A_\lambda$ has a normal ruling, $\rho$.  This produces a generalized normal ruling of $L$ by restricting $\rho$ to $L$ and treating any strands of $L$ which are paired with $A_\lambda$ as fixed point strands.  The normality condition from Definition \ref{def:GNR} follows from that of Definition \ref{def:NR}.  

Now suppose that $L$ has a generalized normal ruling.  If one of the $\rho_m$ has a fixed point strand, then we can continuously follow the fixed point strand around the diagram turning corners only at switches.  The result is a portion of the front diagram, $C_i$, without cusps that we suppose winds $\lambda_i$  times around the annulus.  There may be several fixed point components of this type.  We may assume the $\lambda_i$ are ordered so that they form a partition, $\lambda$.  The product $L\cdot A_\lambda$ has a normal ruling where each $C_i$ is paired with the component $A_{\lambda_i}$ of $\lambda$.  Such a ruling is completely determined once we specify the pairing between $C_i$ and $A_{\lambda_i}$ at a single point of $C_i$.  Now, the normality condition of Definition \ref{def:NR} follows from that of Definition \ref{def:GNR}, and the ordering of the factors of $A_\lambda$ is not important here since we do not have switches between any of the $C_i$ (Remark \ref{rem:GNR}).  See Figure \ref{fig:Ex}.

For the $2$-graded case, observe that in a $2$-graded ruling the orientation of strands meeting at a switch must agree.  Therefore, the $C_i$ each have a consistent orientation, and we choose an orientation on the component $A_{\lambda_i}$ accordingly.
\end{proof}

\begin{figure} 
\centerline{\includegraphics{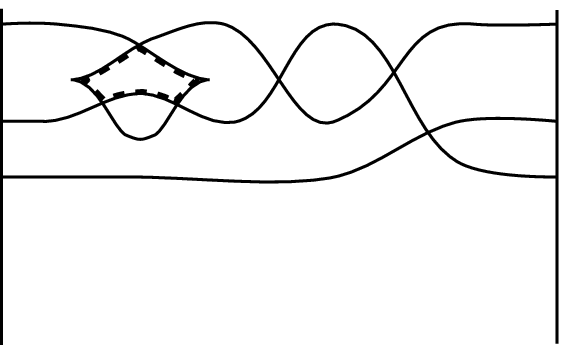} \raisebox{1.5cm}{$\rightarrow$} \includegraphics{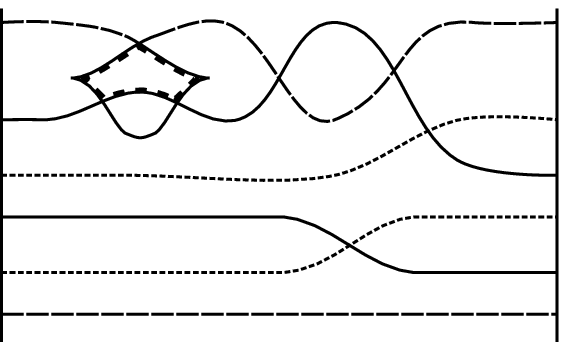}}
\caption{A generalized ruling with $3$ fixed point strands producing a normal ruling of $L \cdot A_\lambda$ with $\lambda = (2,1)$.}
\label{fig:Ex}
\end{figure}

\section{Kauffman polynomial and computation of $1$-graded ruling polynomials}

An analysis of how to compute $2$-graded ruling polynomials of Legendrian solid torus links from the HOMFLY-PT polynomial is done in \cite{R2}. In this section, we will perform a similar analysis of the $1$-graded case. We will derive formulas for the $1$-graded ruling polynomial of $A_\lambda$, and then relate the general case to a coefficient of an appropriate specialization of the Kauffman polynomial.

\subsection{Normal rulings of the product $A_\lambda$}

Given a front diagram $L$ with normal ruling $\rho$ we define the {\it decomposition} of $L$ with respect to $\rho$ as the Legendrian link, $L_\rho$,  obtained by resolving the switches of $L$ into parallel horizontal strands as \[
\fig{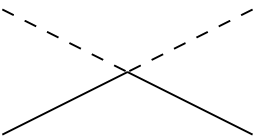} \rightarrow \fig{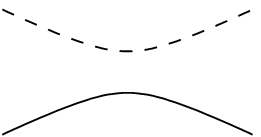}.
\]  The involutions of the strands of $L$ piece together to provide an involution, which we also denote as $\rho$, now defined on all of $L_\rho$.  The involution $\rho$ is continuous where we now view $L_\rho$ as a subset of $J^1(S^1)$ rather than just a front diagram, and its only fixed points correspond to the cusps of the front projection of $L_\rho$.  (Compare with Remark \ref{rem:AltNR}.)  The normal ruling of $L$ induces a normal ruling of $L_\rho$ where none of the crossings are switches.   

We record some observations about normal rulings of the products $A_\lambda$.

\begin{lemma} \label{lem:k} Suppose $\rho$ is a normal ruling of $L = A_\lambda$,

\begin{enumerate}
\item The decomposition, $L_\rho$, is also a product of basic fronts.
\item The involution $\rho$ must take a component of $L_\rho$ isotopic to $A_m$ to another component isotopic to $A_m$.
\item If components $C_1$ and $C_2$ of $L_\rho$ share a common switch of $L$, with $C_1$ above $C_2$ on the $z$-axis, then the vertical ordering of the four components $C_1$, $C_2$, $\rho(C_1)$, and $\rho(C_2)$ must be one of: 
\[
[\rho(C_2), \dots, \rho(C_1), \dots, C_1, C_2] ; \quad  [\rho(C_1), \dots, C_1, C_2, \dots, \rho(C_2)]; \mbox{ or} 
\]
\[
[C_1, C_2, \dots, \rho(C_2), \dots, \rho(C_1)].
\] 

\item The restriction of $\rho$ to a pair of components of $L_\rho$, $C_1$ and $C_2 =\rho(C_1)$, is completely determined by its value at a single point, $w \in C_1$.  Moreover, if $C_1 \cong A_m$ then there are precisely $m$ choices for $\rho(w) \in C_2$, and any one of them extends continuously to all of $C_1$. 
\item Two components of $L_\rho$ of the form $C_1$ and $\rho(C_1)$ cannot correspond to subsets of the same component of $L$.
\end{enumerate}
\end{lemma}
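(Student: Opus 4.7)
The plan is to treat the five parts in sequence, using the continuous-involution viewpoint on $\rho$ from Remark \ref{rem:AltNR} throughout. Two structural features of $L = A_\lambda$ I will lean on repeatedly are that $L$ has no cusps and that, because basic fronts are stacked vertically in a product, all crossings of $L$ are self-crossings of a single factor $A_{\lambda_i}$.

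For part (1), since $L$ has no cusps and resolving a switch introduces none, $L_\rho$ is a disjoint union of embedded closed curves in $S^1 \times \R$ without vertical tangencies. Every such curve is regular-isotopic to some $A_m$, so $L_\rho$ is a product of basic fronts. For part (2), $L_\rho$ has no cusps so $\rho$ extends to a fixed-point-free, $x$-coordinate preserving continuous involution on $L_\rho$. Given a component $C \cong A_m$, connectedness of $C$ and continuity of $\rho$ force $\rho(C)$ to lie in a single component $C'$; since $\rho(C)$ is itself a closed curve, $\rho(C) = C'$. Because $\rho$ preserves $x$-coordinates and $C$ covers $S^1$ with degree $m$, so does $C'$, hence $C' \cong A_m$. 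Part (3) is a direct translation of condition (5) of Definition \ref{def:NR} into component language: the four strands $k, k+1, \rho_m(k), \rho_m(k+1)$ at the switch are the local traces of $C_1, C_2, \rho(C_1), \rho(C_2)$ respectively, and the three allowed inequalities become the three displayed orderings. For part (4), continuity and $x$-preservation show that $\rho|_{C_1}$ is determined by its value at a single point $w$; there are $m$ candidate strands of $C_2$ above $\pi(w)$, and each choice extends continuously around $C_1$ because $C_2 \cong A_m$ covers $S^1$ with the same degree as $C_1$, leaving no obstruction to the global extension.

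The main obstacle is part (5). Here I would argue by contradiction: suppose $C_1 \neq C_2 = \rho(C_1)$ both lie in a single component $K$ of $L$. Since all crossings of $L$ sit inside one factor, $K = A_{\lambda_i}$ is a single closed curve, and $C_1, C_2$ are among the pieces produced when the switches along $K$ are resolved. Because $K$ is connected before resolution, one can find a switch $p$ of $K$ whose two crossing strands, labeled $k$ and $k+1$, lie after resolution on $C_1$ and $C_2$ respectively. At $p$ the pairing then forces $\rho(k) \in C_2$ and $\rho(k+1) \in C_1$, and the normality condition constrains the vertical positions of $\rho(k)$ and $\rho(k+1)$ relative to $\{k, k+1\}$ to one of the three orderings from part (3). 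I would then combine this local datum with the cyclic structure of $K$: following the branch of $C_1$ through strand $k$ and the branch of $C_2$ through strand $k+1$ around the annulus, the identification $\rho(C_1) = C_2$ should force a further switch along $K$ at which the involution either acquires a fixed point or fails to be an involution, contradicting our hypothesis. The delicate point, which I expect to be the hardest, is converting the local normality data into a global obstruction; I would handle this by a careful case analysis on the three configurations, keeping track of how $\rho$ permutes strands as one traverses $K$ around the annulus.
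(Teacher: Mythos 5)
Parts (1)--(4) of your argument are fine and match the paper's (very brief) treatment: (1) and (2) from the absence of cusps and continuity, (3) as a restatement of the normality condition, and (4) from continuity and the count of strands of $C_2$ over a given $x$-coordinate.

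Part (5) contains a genuine gap. You assert that, because $K$ is connected, ``one can find a switch $p$ of $K$ whose two crossing strands lie after resolution on $C_1$ and $C_2$ respectively,'' where $C_2 = \rho(C_1)$. Connectedness of $K$ only guarantees that the components of $L_\rho$ arising from $K$ are joined by a \emph{chain} of switches; it does not produce a switch joining the two particular components $C_1$ and $\rho(C_1)$ directly. In fact no such switch can exist: condition (4) of Definition \ref{def:NR} requires $\rho_{m-1}(k) \neq k+1$ at every crossing, so paired components never share a switch. Thus the local configuration on which your whole contradiction rests cannot occur, and the remaining steps (``should force a further switch\ldots'') are an unexecuted sketch of exactly the hard part. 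The paper's argument runs differently: assuming $\rho(C_1)$ lies below $C_1$ and both come from the same component of $L$, the component $C_1$ must share a switch with some \emph{other} component $C_2$ immediately below it; the normality condition (your part (3)) then forces $\rho(C_2)$ to lie strictly between $C_2$ and $\rho(C_1)$, so the pair $C_2, \rho(C_2)$ again comes from the same component of $L$ and is nested strictly inside the pair $C_1, \rho(C_1)$. Iterating produces infinitely many distinct components of $L_\rho$ between $C_1$ and $\rho(C_1)$, which is absurd. If you want to salvage your approach, replace the claimed direct switch by this nesting/descent argument.
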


\begin{proof} Item (1) is clear; (2) follows from continuity of $\rho$; and (3) is a consequence of the normality condition.  The first assertion of (4) follows from continuity of $\rho$.  The second follows since $\rho(w)$ and $w$ must have the same $x$-coordinate and  $C_2$ also consists of $m$ strands.  That any such choice of $\rho(w)$ extends to all of $C_1$ is easily seen. 

We prove (5) by contradiction.
Suppose $C_1$ and $\rho(C_1)$ did come from the same component of $L$, and without loss of generality assume $\rho(C_1)$ is below $C_1$. They cannot meet at a switch as this would violate the normality condition.  Thus, there is some other component $C_2$ on the other end of the switch below $C_1$. The only possible position of $\rho(C_2)$ is then between $C_2$ and $\rho(C_1)$.  
Then $C_2$ and $\rho(C_2)$ also came from the same component of $L$. They cannot meet at a switch, so there is some further component $C_3$ immediately below $C_2$, which is paired with a component $\rho(C_3)$ between $C_3$ and $\rho(C_2)$.  We can continue this argument to produce arbitrarily many components of $L_\rho$ between $C_1$ and $\rho(C_1)$. 
\end{proof}

\subsection{Computing $R^1_{A_m A_m}$}

The results in the previous section are sufficient to compute the ruling polynomial for the simplest possible product, $A_m A_m$ (the ruling polynomial of a single basic front $A_m$ is $0$ by (5) of Lemma \ref{lem:k}).  Although this agrees with $R^2_{A_mA_{-m}}$ which is computed in Lemma 4.1 of \cite{R2}, the form of the answer given here is simplified and the proof is quite different.

\begin{lemma} 
\label{lem:am2}
The ruling polynomial of $L = A_m A_m$ is 
$$\sum_{k=0}^{m-1} {m+k \choose 2k+1} z^{2k}.$$
\end{lemma}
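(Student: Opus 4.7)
The plan is to enumerate normal rulings $\rho$ of $L = A_m A_m$ according to the number $s$ of basic-front components in the decomposition $L_\rho$ of the top $A_m$ copy, then verify that the weighted contribution for fixed $s$ is $\binom{m+s-1}{2s-1}$; substituting $k = s-1$ produces the claimed formula.

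By Lemma \ref{lem:k}(1), $L_\rho$ is a product of basic fronts. Since $A_m A_m$ has no crossings between its two copies and no cusps, every switch of $\rho$ lies within one of the two copies. By Lemma \ref{lem:k}(5), the pairing $\rho$ matches each component of the top decomposition with a component of the bottom, and by Lemma \ref{lem:k}(2) paired components have equal size, so both copies decompose into the same number $s$ of basic fronts. This gives $s-1$ switches in each copy and no right cusps, so $j(\rho) = 2(s-1)$. Writing the top decomposition as $A_{\lambda_1}\cdots A_{\lambda_s}$ in top-to-bottom stacking order, two components in this product are vertically adjacent only when consecutive in the stack, so each of the $s-1$ switches in the top occurs between some $C_i^{\top}$ and $C_{i+1}^{\top}$. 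Applying case 3 of the ordering in Lemma \ref{lem:k}(3)---the only one compatible with $\rho(C_i^{\top})$ lying in the bottom---forces $\rho(C_{i+1}^{\top})$ above $\rho(C_i^{\top})$; telescoping yields $\rho(C_i^{\top}) = D_{s+1-i}^{\bot}$, so the bottom decomposition is necessarily $A_{\lambda_s}\cdots A_{\lambda_1}$, and the symmetric argument applied to the bottom switches produces the same pairing.

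It remains to count. The top decompositions into $s$ parts biject with compositions $(\lambda_1, \ldots, \lambda_s)$ of $m$: in the standard braid presentation $\sigma_1\sigma_2\cdots\sigma_{m-1}$ of $A_m$, designating crossings $c_{i_1} < \cdots < c_{i_{s-1}}$ as switches produces exactly the composition $(i_1, i_2-i_1, \ldots, m-i_{s-1})$. Given the top composition, the bottom is forced; and by Lemma \ref{lem:k}(4), each of the $s$ matched pairs $A_{\lambda_i}\leftrightarrow A_{\lambda_i}$ admits $\lambda_i$ independent choices for $\rho(w)$. The weighted count for fixed $s$ is therefore
\[
\sum_{\substack{\lambda_i \geq 1 \\ \lambda_1 + \cdots + \lambda_s = m}} \lambda_1 \cdots \lambda_s \;=\; [x^m]\!\left(\frac{x}{(1-x)^2}\right)^{\!\!s} \;=\; \binom{m+s-1}{2s-1},
\]
and summing $z^{2(s-1)}$ times this count over $s = 1, \ldots, m$ gives the formula. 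The main subtlety is verifying the bijection between switch subsets and compositions, which is a routine combinatorial check once one writes $A_m$ as a cyclic braid.
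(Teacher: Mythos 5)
Your proof is correct and follows essentially the same route as the paper: both arguments reduce the count to choosing the $s-1$ switches in the top factor (equivalently a composition of $m$), use Lemma~\ref{lem:k} to see that the bottom decomposition and its reversed order are then forced, and multiply the $\lambda_i$ independent pairing choices per matched component. The only difference is cosmetic: you evaluate $\sum \lambda_1\cdots\lambda_s = \binom{m+s-1}{2s-1}$ with a generating function, whereas the paper gets the same binomial coefficient via an explicit bijection with compositions of $m+k+1$ into $2(k+1)$ positive parts.
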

\begin{proof}

\begin{figure}
  \centering
  {\label{fig:bijection-1}\includegraphics {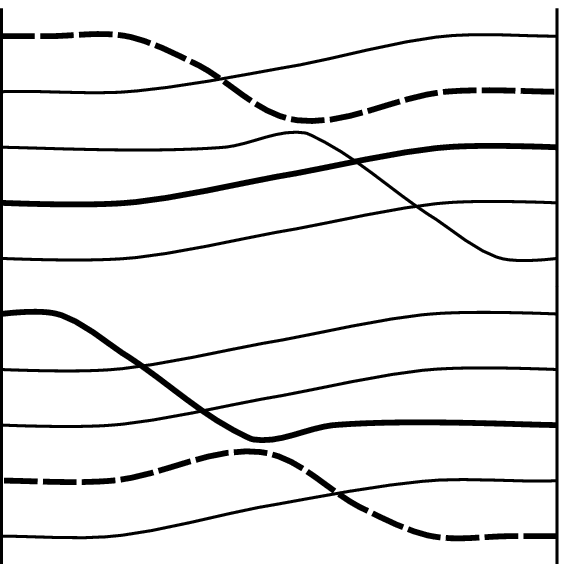}}  \quad
  {\label{fig:bijection-2}\includegraphics{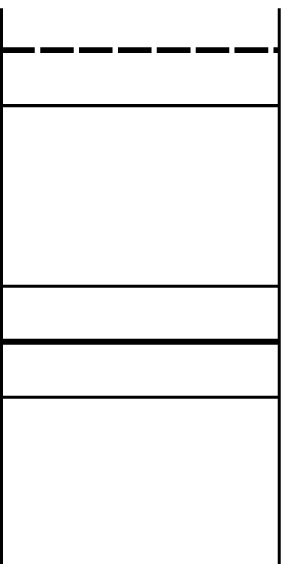}}  \quad
  {\label{fig:bijection-3}\includegraphics{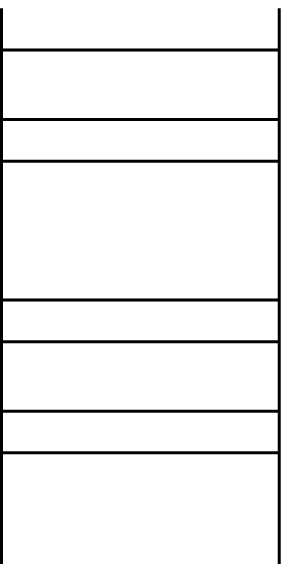}}
  \caption{The bijection between rulings of $A_5 A_5$ with 2 switches, divisions of 5 objects into 2 parts with a marked object in each part, and compositions of 7 into 4 positive parts.}
  \label{fig:bijection}
\end{figure}

 Normal rulings of $A_mA_m$ with $2k$ switches are in bijection with subdivisions of $m$ ordered objects into $k+1$ consecutive parts, with a marked object chosen in each part.

 The subdivision corresponds to choosing the location of $k$ switches within the first $A_m$ factor.  Specifically, dividing $m$ into parts $(\lambda_1, \ldots, \lambda_{k+1})$ corresponds to choosing $k$ switches so that in the decomposition, $L_\rho$, the first $A_m$ factor becomes $A_{\lambda_1}\cdots A_{\lambda_{k+1}}$.  In $L_\rho$, the $A_{\lambda_i}$ must be paired with $k+1$ components of the same size from the second $A_m$ factor, by Lemma~\ref{lem:k}~(2) and (5).  Then, Lemma~\ref{lem:k}~(3) determines the order of the components: they must be in the reverse order of the components from the first factor.  The total number of switches is $2k$.

The choice of marked object within a part $\lambda_i$ corresponds to choosing which strand within the $A_{\lambda_i}$ component is paired with the top strand of $\rho(A_{\lambda_i})$ at $x=0$.  These choices may be arbitrary, and they uniquely determine a ruling by Lemma~\ref{lem:k}~(4).  See Figure \ref{fig:bijection}.

To complete the proof observe that subdivisions of this type are in turn in bijection with compositions of $m+(k+1)$ into $2(k+1)$ positive parts,  $(a_1, b_1, \ldots, a_{k+1}, b_{k+1})$: two consecutive parts of size $a_i$ and $b_i$ correspond to a part $\lambda_i = a_i+b_i-1$ with the $a_i$-th object marked in a subdivision of $m$.  The number of ways to decompose $m+k+1$ objects into $2(k+1)$ parts of positive size is well-known: it is ${(m+k+1) - 1  \choose 2(k+1) - 1}$ or ${m+k \choose 2k+1}$.  This gives us the sum for the ruling polynomial.

\end{proof}

This formula will be used in the next section, so we will write $\< m \>$ for the ruling polynomial $R_{A_m A_m}(z)$, following the convention in \cite{R2}.

\subsection{A Formula for Arbitrary Products of Basic Fronts}

We will use the formula for $\< m \>$ to calculate the ruling polynomial of $A_\lambda$ for an arbitrary $\lambda$.

Given a normal ruling $\rho$ of $L = A_\lambda$, define the \emph{block} $B_{ij}$ to consist of those components of the decomposition $L_\rho$ which originated in the $i$-th component of $L$, and are paired by $\rho$ with components that originated in the $j$-th component of $L$. The size of the block, $b_{ij}$, is the number of points in $B_{ij}$ with some fixed $x$-coordinate, away from crossings.

\begin{lemma}
\label{lem:block}
Given a normal ruling of $L = A_\lambda= A_{\lambda_1} A_{\lambda_2}\cdots A_{\lambda_n}$, the blocks in the $i$-th component of $L$ consist of vertically consecutive components of $L_\rho$, and are themselves vertically ordered as follows (some blocks may be empty):
$$ B_{i,i-1} B_{i,i-2} \cdots B_{i,1} B_{i,n} B_{i,n-1} \cdots B_{i,i+1} $$
\end{lemma}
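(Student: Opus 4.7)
My plan is to analyze the vertical ordering of the components of $L_\rho$ originating in the $i$-th component $A_{\lambda_i}$ by inspecting each vertically adjacent pair of such components and invoking the normality constraint in Lemma~\ref{lem:k}(3). Fix $i$ and let $C_1, C_2$ be vertically adjacent components coming from $A_{\lambda_i}$, with $C_1$ above $C_2$; they were separated by the resolution of a crossing inside $A_{\lambda_i}$ that must have been a switch, so they share a common switch of $L$. By Lemma~\ref{lem:k}(5) their partners $\rho(C_1),\rho(C_2)$ lie in $A_{\lambda_{j_1}}$ and $A_{\lambda_{j_2}}$ for some $j_1,j_2\neq i$, and Lemma~\ref{lem:k}(3) lists the three allowed vertical configurations of $[C_1, C_2, \rho(C_1), \rho(C_2)]$.

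Translating each of the three configurations from the $z$-axis ordering to the component indexing of $L = A_{\lambda_1}\cdots A_{\lambda_n}$ (with component $1$ on top, so that greater height corresponds to smaller index), I would obtain the trichotomy: either both $j_1,j_2<i$ with $j_2\leq j_1$; or $j_1<i<j_2$; or both $j_1,j_2>i$ with $j_2\leq j_1$. The fourth logically possible case $j_2<i<j_1$---which would correspond to passing, while descending, from a component whose partner lies below $A_{\lambda_i}$ to one whose partner lies above $A_{\lambda_i}$---is not among the three allowed configurations and therefore cannot occur.

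Applying this local rule across all adjacent pairs in the portion of $L_\rho$ coming from $A_{\lambda_i}$, the partner-indices $j$ of the successive components read top to bottom form a sequence which is nonincreasing throughout the initial ``above-paired'' regime ($j<i$), switches value at most once (and only from above-paired to below-paired), and is again nonincreasing throughout the final ``below-paired'' regime ($j>i$). Components with a common $j$ are therefore vertically consecutive, so each $B_{i,j}$ is a contiguous (possibly empty) block, and reading the non-empty blocks from top to bottom we recover exactly $B_{i,i-1}, B_{i,i-2}, \ldots, B_{i,1}, B_{i,n}, B_{i,n-1}, \ldots, B_{i,i+1}$.

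The main obstacle is really just the bookkeeping: keeping consistent track of the two different vertical orders---the $z$-axis ordering used in Lemma~\ref{lem:k}(3) and the component-indexing of $L$ used in the present lemma---so that the inequalities coming out of the three cases land with the correct orientation. Once this translation is fixed, the proof reduces to a direct reading of the normality condition together with the ``no transition back'' observation that rules out the forbidden fourth case.
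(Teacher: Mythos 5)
Your proof is correct and takes essentially the same route as the paper's: both apply the normality condition of Lemma~\ref{lem:k}(3) to each vertically adjacent pair of components of $L_\rho$ sharing a switch, rule out the fourth configuration, and deduce contiguity of the blocks together with the stated ordering. Your monotone partner-index bookkeeping is just a careful reformulation of the paper's nesting argument, and it lands on the correct order $B_{i,i-1}\cdots B_{i,1}B_{i,n}\cdots B_{i,i+1}$.
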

\begin{proof}
Suppose that when we resolve $A_{\lambda_i}$ at switches, we get the components $C_1, C_2, \dots, C_k$, in that vertical order. If, for some $j$, $\rho(C_j)$ is above $C_j$, then the normality condition demands that $\rho(C_{j-1})$ is between $\rho(C_j)$ and $C_{j-1}$. Similarly, if $\rho(C_j)$ is below $C_j$, then $\rho(C_{j+1})$ must be between $C_{j+1}$ and $\rho(C_j)$.

As a result, if $\rho(C_{j_1})$ and $\rho(C_{j_2})$ come from the same component of $L$, then $\rho(C_j)$ for $j_1 \le j \le j_2$ are between $\rho(C_{j_1})$ and $\rho(C_{j_2})$. This implies each block is made up of some number of consecutive components. And due to the normality condition, the ordering of any two consecutive blocks must be either $B_{i,j+1} B_{i,j}$, with $j>i$, or $B_{i,j-1} B_{i,j}$, with $j<i$ (with the caveat that some of the blocks may be empty, if $\rho$ does not pair two components of $L$ at all). Putting this together yields the block ordering above.\end{proof}

This means that once we pick the sizes of the blocks $b_{i,1} \dots b_{i,n}$, the locations of the blocks are determined. To complete the calculation of the ruling polynomial, observe that the choice of a normal ruling of the blocks $B_{ij}$ and $B_{ji}$, with $b_{ij} = b_{ji} = m$, is equivalent to the choice of a normal ruling of $A_m A_m$.

\begin{theorem}
\label{thm:front}
Let $\<m\>$ denote the ruling polynomial of $A_m A_m$, with $\<0\>$ taken to be $z^{-2}$. Then the ruling polynomial of $A_\lambda = A_{\lambda_1} A_{\lambda_2} \dots A_{\lambda_n}$ is given by
$$z^{n(n-1)} \sum_{(b_{ij}) \in M_{\lambda}} \prod_{i < j} \< b_{ij} \>$$
where $M_{\lambda}$ is the set of all symmetric matrices $(b_{ij})$ with nonnegative integer entries such that the row sums $\sum_{j=1}^n b_{ij} = \lambda_i$ and the trace $\op{tr}\,(b_{ij}) = 0$.
\end{theorem}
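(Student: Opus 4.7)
The plan is to organize the normal rulings of $A_\lambda$ according to their block-size matrix $(b_{ij})$ and then, for each $(b_{ij})\in M_\lambda$, count the number of ways to complete the ruling as a product over block pairs. Starting from any normal ruling $\rho$ of $A_\lambda$, the block decomposition produces a matrix that is symmetric by Lemma \ref{lem:k}(2), has vanishing trace by Lemma \ref{lem:k}(5), and has row sums $\lambda_i$ by construction, so that $(b_{ij})\in M_\lambda$ automatically. Conversely, once the matrix is fixed, Lemma \ref{lem:block} pins down the vertical arrangement of the blocks inside each $A_{\lambda_i}$, and the remaining data consists, for each pair $i<j$ with $b_{ij}>0$, of the internal structure of the block pair $(B_{ij},B_{ji})$.

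The next step is to identify the internal data of a block pair $(B_{ij},B_{ji})$ with a normal ruling of $A_{b_{ij}}A_{b_{ij}}$: the block $B_{ij}$ is subdivided into basic-front components by switches strictly interior to $B_{ij}$ inside $A_{\lambda_i}$; Lemma \ref{lem:k}(2) forces a matching subdivision of $B_{ji}$; and Lemma \ref{lem:k}(3) imposes exactly the normality restriction already used in the proof of Lemma \ref{lem:am2}. Because Lemma \ref{lem:block} guarantees that no component of any other block intervenes vertically between paired components of $(B_{ij},B_{ji})$, distinct block pairs may be ruled independently of one another, and the weighted count for a single block pair comes out to $\<b_{ij}\>$. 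This yields the factor $\prod_{i<j}\<b_{ij}\>$ coming from switches internal to block pairs.

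What remains is to count the switches at the \emph{boundaries} between consecutive non-empty blocks inside each $A_{\lambda_i}$ and to combine their contribution with the convention $\<0\>=z^{-2}$, which is designed precisely to absorb the empty pairs $(i,j)$ with $b_{ij}=0$ into the product. Letting $K_i$ be the number of non-empty blocks in the $i$-th row and $N$ the number of pairs $i<j$ with $b_{ij}>0$, one uses $\sum_i K_i = 2N$ to rewrite the sum $\sum_i(K_i-1)$ of boundary switch counts in terms of $N$, and then a direct manipulation packages the boundary-switch contribution together with the combined $\<0\>$-corrections into the uniform prefactor $z^{n(n-1)}$. The main obstacle is precisely this last bookkeeping step, together with verifying rigorously that Lemma \ref{lem:k}(3) does not impose any cross-pair constraints that would spoil the factorization of the previous paragraph.
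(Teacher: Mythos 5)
Your overall strategy---stratifying rulings by the block-size matrix, using Lemma \ref{lem:block} to fix the block positions, and reducing each block pair $(B_{ij},B_{ji})$ to an independent ruling of $A_{b_{ij}}A_{b_{ij}}$---is exactly the paper's argument, and your first two paragraphs are sound. The worry you raise at the end about cross-pair interference from Lemma \ref{lem:k}(3) is already disposed of by Lemma \ref{lem:block}: the normality condition is precisely what forces the block ordering, and once that ordering is fixed the only remaining constraints are internal to each pair.

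The genuine gap is the final ``direct manipulation'' that you assert but never perform, and it does not give what you claim. Carry it out with your own notation: the boundary switches number $\sum_i(K_i-1)=2N-n$, while replacing $\prod_{b_{ij}>0}\<b_{ij}\>$ by $\prod_{i<j}\<b_{ij}\>$ costs a factor $z^{2\left(\binom{n}{2}-N\right)}=z^{n(n-1)-2N}$ to cancel the $\<0\>=z^{-2}$ contributions of the $\binom{n}{2}-N$ empty pairs. The total prefactor is therefore $z^{(2N-n)+(n(n-1)-2N)}=z^{n(n-2)}$, not $z^{n(n-1)}$. This discrepancy is not yours alone: the exponent $n(n-1)$ in the statement, and the corresponding claim in the paper's proof that each component contains $n-1$ boundary switches when all blocks are non-empty (there are $n-1$ blocks, hence only $n-2$ boundaries), are off by a factor of $z^{n}$. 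You can confirm that $n(n-2)$ is the correct exponent from the case $n=2$, where the formula must return $\<m\>$ for $A_mA_m$ itself, and from the paper's own examples $R^1_{A_{(2,1,1)}}=z$, $R^1_{A_{(3,2,1)}}=2z+z^3$, and $R^1_{A_{(2,2,1,1)}}=2+3z^2$, each of which requires the prefactor $z^{n(n-2)}$. So your bookkeeping setup is the right one, but by waving at the computation instead of doing it you both left the proof incomplete and missed that the honest count contradicts the stated formula.
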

\begin{proof}

The choice of a matrix in $M_\lambda$ is equivalent to the choice of block sizes $b_{ij}$. By Lemma \ref{lem:block}, this also fixes the locations of the blocks.  A normal ruling of $A_\lambda$ is then completely determined by its restriction to pairs of blocks $B_{ij}$ and $B_{ji}$.

If the block size $b_{ij}$ is nonzero, then $\< b_{ij}\>$ describes the possible restrictions of the normal rulings to the union  $B_{ij}\cup B_{ji}$. We take the product to combine these normal rulings, but we have to account for the switches between the blocks. If all block sizes are nonzero, then there will be $n-1$ switches in each of the $n$ components of $L$, giving us a factor of $z^{n(n-1)}$. Any block $B_{ij}$ of size $0$ will reduce this number by $1$ in component $j$, but the corresponding block $B_{ji}$ will reduce the number of switches by $1$ in component $i$; this gives a factor of $z^{-2}$ which is accounted for by the convention of $\<0\> = z^{-2}.$
\end{proof}

\begin{corollary}
\label{cor:comm}
The 1-graded ruling polynomial is commutative in front diagram products: that is, the ruling polynomials of 
$$ A_{\lambda_1} A_{\lambda_2} \cdots A_{\lambda_i} A_{\lambda_{i+1}} \cdots A_{\lambda_n}$$
and
$$ A_{\lambda_1} A_{\lambda_2} \cdots A_{\lambda_{i+1}} A_{\lambda_i} \cdots A_{\lambda_n}$$
are equal.
\end{corollary}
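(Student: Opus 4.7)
My plan is to deduce this directly from the formula of Theorem \ref{thm:front}. Let $\lambda = (\lambda_1, \ldots, \lambda_n)$ and let $\lambda'$ denote the sequence obtained by swapping $\lambda_i$ with $\lambda_{i+1}$. I would construct an explicit bijection $\Phi : M_\lambda \to M_{\lambda'}$ under which corresponding summands in the two instances of the formula agree.

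The natural choice is $\Phi(B) = PBP^T$, where $P$ is the permutation matrix of the transposition $(i\,\,i+1)$; equivalently, $\Phi$ simultaneously swaps rows $i, i+1$ and columns $i, i+1$. I would verify that $\Phi$ preserves the defining conditions of $M_\lambda$: symmetry and nonnegativity are immediate, the trace-zero condition survives because the only affected diagonal entries sit at positions $(i,i)$ and $(i+1,i+1)$ and both equal $0$, and the row sums of $\Phi(B)$ are those of $B$ with the $i$- and $(i+1)$-coordinates interchanged, hence equal $\lambda'$. Since $\Phi$ is its own inverse, it is a bijection.

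The remaining step is to show that $\prod_{k < l} \<b'_{kl}\> = \prod_{k < l} \<b_{kl}\>$ when $b' = \Phi(B)$. This will come down to pure bookkeeping on how conjugation by the transposition $(i\,\,i+1)$ permutes the upper-triangular off-diagonal entries: pairs $(k,l)$ disjoint from $\{i, i+1\}$ contribute unchanged factors; the entry at $(i, i+1)$ is sent to the entry at $(i+1, i)$, which equals $b_{i, i+1}$ by symmetry of $B$; and for each $k < i$ the factors $\<b_{k, i}\>$ and $\<b_{k, i+1}\>$ merely exchange positions in the product, as do $\<b_{i, l}\>$ and $\<b_{i+1, l}\>$ for each $l > i+1$. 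The multiset of factors is therefore preserved, and because the prefactor $z^{n(n-1)}$ depends only on $n$, Theorem \ref{thm:front} will yield $R^1_{A_\lambda} = R^1_{A_{\lambda'}}$. I do not foresee any genuine obstacle here: the commutativity is essentially built into the symmetric nature of $M_\lambda$ and the symmetric product $\prod_{k<l}\<b_{kl}\>$, so the only content is the routine verification that $\Phi$ respects the summand structure.
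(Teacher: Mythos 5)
Your proposal is correct and is essentially identical to the paper's proof: the paper also exchanges the $i$-th and $(i+1)$-th rows and columns of the matrices in $M_\lambda$ to obtain a bijection with $M_{\lambda'}$ and observes that the summands $\prod_{i<j}\<b_{ij}\>$ are unchanged. You simply spell out the routine verifications (symmetry, trace, row sums, and the permutation of factors) in more detail than the paper does.
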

\begin{proof}
There is an easy bijection between the possibilities for the matrix $M_{\lambda}$ and the new matrix $M_{\lambda'}$: we simply exchange the $i$-th and $(i+1)$-th columns and rows; the summands $\prod_{i < j} \< b_{ij} \>$ do not change.
\end{proof}

\subsection{Calculating the ruling polynomial from the Kauffman polynomial}

In $\R^3$, the 1-graded and 2-graded ruling polynomial of arbitrary Legendrian links may be easily recovered from the Kauffman and HOMFLY-PT polynomials. The second author shows in \cite{R2} that the $1$-graded (resp. $2$-graded) ruling polynomial of a link $L$ is the coefficient of $a^{-tb(L)}$ in the Kauffman polynomial (resp. HOMFLY-PT polynomial) of $L$.
In the case of Legendrian solid torus links we first need to specialize the extra variables in a non-multiplicative manner.

Using the notation of Section \ref{sec:KP}, consider the $R$-module homomorphism $\Psi: \mathcal{F} \cong R[A_1, A_2, \ldots] \rightarrow R$ determined by $A_\lambda \mapsto R^1_{A_\lambda}(z)$ when $\lambda$ is a partition.  (Compare with Remark \ref{rem:basis}.)  Given a link diagram $L$, we let $\widehat{D}_L(a,z) = \Psi( D_L)$, and $\widehat{F}_L(a,z) = a^{-w(L)} \widehat{D}_L(a,z)$.

\begin{theorem}
\label{thm:spec}
Let $L \subset J^1(S^1)$ be any Legendrian solid torus link.  Then, the $1$-graded ruling polynomial $R^1_L(z)$ is equal to the coefficient of $a^{-\tb(L)}$ in $\widehat{F}_L(a,z)$.
\end{theorem}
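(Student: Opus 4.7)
The plan is to combine the basis decomposition of the Kauffman skein module with a careful state-sum analysis of which crossing resolutions contribute to the leading $a$-coefficient. By Remark \ref{rem:basis}, every class in $\mathcal{F}$ has a unique expansion in the basis $\{A_\lambda\}$ indexed by partitions, giving
$$D_L(a,z;A_1,A_2,\ldots) = \sum_\lambda c_\lambda(a,z) A_\lambda$$
for uniquely determined $c_\lambda \in R$. Applying $\Psi$ and normalizing by $a^{-w(L)}$ yields
$$\widehat{F}_L(a,z) = a^{-w(L)} \sum_\lambda c_\lambda(a,z)\, R^1_{A_\lambda}(z),$$
so extracting the target coefficient amounts to analyzing $[a^{c(L)}] \sum_\lambda c_\lambda(a,z) R^1_{A_\lambda}(z)$.

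Next, I would compute the coefficients $c_\lambda$ through a state-sum obtained by resolving every crossing of $L$ via the Kauffman skein relations \eqref{eq:SR1}--\eqref{eq:SR2}. A state $s$ assigns to each crossing one of the four local pictures in \eqref{eq:SR1} weighted by appropriate powers of $a$ and $z$; after fully resolving, the result is regular isotopic to a product of basic fronts $A_\mu(s)$, so $s$ contributes a monomial times $A_{\mu(s)}$ to the expansion. The Chmutov--Goryunov inequality \eqref{eq:CGEst} implies that only states of maximal total $a$-degree can contribute to $[a^{-\tb(L)}]\widehat{F}_L$. Inspection of the weights in \eqref{eq:SR1}--\eqref{eq:SR2} shows that these maximal states are exactly those that, at each crossing, either resolve in one of the two smoothings consistent with a ruling (switch versus non-switch) or produce a fixed-point strand, subject to the normality condition enforced by the sign pattern in \eqref{eq:SR1}.

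I would then identify the maximal states with generalized normal rulings of $L$: each such state partitions the strands of $L$ into paired strands (corresponding to the components of $L_\rho$) and fixed-point strands (winding around the annulus to form the factors of $A_\lambda$), exactly as in Lemma \ref{lem:GNRiff}. By Theorem \ref{thm:front}, $R^1_{A_\lambda}(z)$ sums over all ways of pairing the fixed-point strands into blocks $B_{ij}$ together with an internal ruling of each $\langle b_{ij}\rangle$. Matching this with the ruling of $L_\rho$ itself produces, via the identification of Lemma \ref{lem:GNRiff}, a complete normal ruling of $L \cdot A_\lambda$. The bijection sends the $z$-weight of a state to $z^{j(\rho)}$ in the corresponding generalized ruling, recovering $R^1_L(z) = \sum_\rho z^{j(\rho)}$.

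The main obstacle will be the bookkeeping in the middle step: showing that the leading-$a$ states correspond bijectively and with matching $z$-weights to generalized normal rulings. Several subtleties arise, in particular (i) precisely tracking the $a$-degree through the writhe-dependent framing relations \eqref{eq:SR2} and the disjoint-circle relation \eqref{eq:SR3}, (ii) verifying that all sub-maximal resolutions cancel in the coefficient extraction, and (iii) isolating the three normality configurations in Figure \ref{fig:NormC} (and their fixed-point analogs in Figure \ref{fig:GenNormC}) from among the six a priori possible local orderings at a switch. This parallels the HOMFLY-PT argument in \cite{R2} but requires extra care because the Kauffman skein has two smoothing options at each crossing rather than one, and because the diagrams are unoriented.
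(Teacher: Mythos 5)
There is a genuine gap, and it sits exactly where you locate the ``main obstacle.'' The state sum in which ``a state $s$ assigns to each crossing one of the four local pictures in \eqref{eq:SR1}'' is not well defined for the Kauffman skein relations. Unlike the Kauffman bracket, relation \eqref{eq:SR1} does not express a crossing as a linear combination of its smoothings: it relates the \emph{difference} of a crossing and its switched crossing to the difference of the two smoothings. Two of your four ``local pictures'' are themselves crossings, so choosing them everywhere returns a diagram with the same crossing number and the expansion does not terminate. To express $[L]$ in the basis $\{A_\lambda\}$ one must order the crossings and run an unknotting/descending-diagram algorithm, and the resulting coefficients $c_\lambda$ depend on the global intermediate diagrams produced along the way, not on independent local choices at each crossing. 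Consequently the assertions that the maximal states are ``exactly those \ldots subject to the normality condition enforced by the sign pattern'' and that sub-maximal resolutions cancel currently have no precise meaning; items (i)--(iii) of your list of obstacles are not bookkeeping but the entire content of the theorem.

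The paper's proof takes a different and essentially unavoidable route: induction on a complexity measure for front diagrams, following Theorem 6.3 of \cite{R2}. The base case is the class of products of basic fronts, where the statement reduces to Corollary \ref{cor:comm} --- commutativity of $R^1$ on such products is needed precisely because $\mathcal{F}$ is commutative and $\Psi$ is defined on partition-ordered monomials, while the Legendrian product is not commutative. The inductive step observes that $R^1_L(z)$ and the coefficient of $a^{-\tb(L)}$ in $\widehat{F}_L$ satisfy the same \emph{Legendrian} skein relations (front-diagram analogues of \eqref{eq:SR1}--\eqref{eq:SR3}, established in \cite{R1} and \cite{R2}), together with an algorithm that rewrites any front in terms of fronts of strictly lower complexity. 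To repair your outline you would have to prove these common Legendrian skein relations for both quantities and supply the termination argument --- at which point you have reconstructed the paper's proof. Note also that your outline never invokes Corollary \ref{cor:comm}, yet some form of it is forced on you by the mismatch between the commutative skein-module basis and the noncommutative Legendrian product.
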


This result is analogous to Theorem 6.3 of \cite{R2}, where it is shown that we can recover the $2$-graded ruling polynomial from such a specialization of the HOMFLY-PT polynomial. The proof, via induction on a certain measure of complexity of a front diagram, carries through in the 1-graded case as well.  The base case consists of all products of basic fronts where the result follows from the crucial Corollary~\ref{cor:comm}.  Next, it is observed that the ruling polynomial and the coefficient of $a^{-\tb(L)}$ in $\widehat{F}_L$ share common skein relations which are Legendrian analogs of equations (\ref{eq:SR1})-(\ref{eq:SR3})  (see \cite{R1} or \cite{R2}).  Then, just as in \cite{R2}, the inductive step is completed by an algorithm which uses these skein relations to evaluate the invariants in terms of front diagrams of lesser complexity.

\begin{example}
Consider the Legendrian knots $L_1$ and $L_2 = L_1\cdot A_2 A_1$ pictured in Figure \ref{fig:Ex}, and suppose orientations are chosen so that all strands are oriented to the right when they pass through the vertical line $x=0$.  The Kauffman polynomials are given by
\[\begin{array}{ccl}
F_{L_1}= & &A_1 \times \left[ a^{-1}(-z-z^3) + a^{-2}z^4 + a^{-3}(z+2z^3) + a^{-4}z^2 \right] \\
& + & A_3 \times \left[ a^{-1}(z+z^3) + a^{-2}(-z^2-z^4) + a^{-3}(-z-z^3) \right] \\
& + & A_2 A_1 \times \left[ a^{-1}(1+z^2) - a^{-2}z^3 - a^{-3}z^2 \right],
\end{array}
\]
and $F_{L_2} = a^{-1} A_2A_1 F_{L_1}$.  We have $\tb(L_1) = 1$ and $\tb(L_2) =2$, so in both cases the estimate (\ref{eq:CGEst}) is sharp.

Using Theorem \ref{thm:front}, one has $R^1_{A_{(2,1,1)}}(z)= z$; $R^1_{A_{(3,2,1)}}(z)=  2z+z^3$; and $R^1_{A_{(2,2,1,1)}}(z)=  2+3z^2$.  This allows us to compute 
\[
\widehat{F}_{L_2} = a^{-2}( 2 + 6 z^2 + 5 z^4 + z^6) +a^{-3}(-4 z^3 - 5z^5 - z^7) + a^{-4} (-3z^2-4z^4-z^6) +a^{-5} z^3,
\]
and Theorem \ref{thm:spec} gives $R^1_{L_2}(z) = 2 + 6 z^2 + 5 z^4 +z^6$ which can be verified directly.
\end{example}

\section{Generalized normal rulings and the Thurston-Bennequin estimates}

In this section we establish the equivalence (1) of Theorem \ref{the:Main} which follows from Lemma \ref{lem:GNRiff} together with the following:

\begin{theorem}
\label{thm:sharp}
Let $L$ be a Legendrian link in the solid torus. Then the equality
$$tb(L) = -\deg_a F_L$$
holds if and only if there exists a partition $\lambda$ so that $L \cdot A_\lambda$ has a normal ruling. 
\end{theorem}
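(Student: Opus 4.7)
The plan is to prove the two implications through a common setup based on Theorem~\ref{thm:spec} and the multiplicativity of the Kauffman polynomial. Write $F_L = \sum_\mu c_\mu(a,z)\,A_\mu$ in the $R$-basis $\{A_\mu\}_\mu$ of $\mathcal{F}$, let $\varphi_\mu(z) \in \Z[z^{\pm 1}]$ denote the coefficient of $a^{-\tb(L)}$ in $c_\mu$, and set $\Phi_L := \sum_\mu \varphi_\mu A_\mu \in \mathcal{F}$. Sharpness of the estimate~(\ref{eq:CGEst}) is equivalent to the single statement $\Phi_L \ne 0$. Since the Kauffman polynomial is multiplicative under stacking (no crossings arise between vertically separated diagrams), we have $F_{L \cdot A_\lambda} = a^{-\tb(A_\lambda)}\, F_L \cdot A_\lambda$; reading off the coefficient of $a^{-\tb(L \cdot A_\lambda)}$ produces $\Phi_L \cdot A_\lambda$, and applying $\Psi$ together with Theorem~\ref{thm:spec} yields the key identity
\[
R^1_{L \cdot A_\lambda}(z) \;=\; \Psi(\Phi_L \cdot A_\lambda) \;=\; \sum_\mu \varphi_\mu(z)\, R^1_{A_{\mu \cup \lambda}}(z).
\]

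The reverse direction follows immediately from this identity. If $L \cdot A_\lambda$ admits a normal ruling then $R^1_{L \cdot A_\lambda}(z) \ne 0$, so by $R$-linearity of $\Psi$ the coefficient of $a^{-\tb(L \cdot A_\lambda)}$ in $F_{L \cdot A_\lambda} \in \mathcal{F}$ is itself nonzero. Combined with~(\ref{eq:CGEst}) applied to $L \cdot A_\lambda$, this forces sharpness $\tb(L \cdot A_\lambda) = -\deg_a F_{L \cdot A_\lambda}$. From the basis expansion above one reads $\deg_a F_{L \cdot A_\lambda} = \deg_a F_L - \tb(A_\lambda)$, and combining with $\tb(L \cdot A_\lambda) = \tb(L) + \tb(A_\lambda)$ gives $\tb(L) = -\deg_a F_L$.

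For the forward direction, sharpness gives $\Phi_L \ne 0$, and it suffices to produce a partition $\lambda$ with $\Psi(\Phi_L \cdot A_\lambda) \ne 0$. This is the linear-independence claim promised in the introduction and is the main obstacle of the proof; it amounts to nondegeneracy of the $R$-bilinear pairing $B(u,v) := \Psi(uv)$ on $\mathcal{F}$. The main tool is the explicit formula of Theorem~\ref{thm:front}: whenever $M_\nu \ne \emptyset$, the polynomial $R^1_{A_\nu}(z)$ has a controlled leading $z$-degree with leading coefficient $|M_\nu|$. The plan is to choose $\lambda$ with enough parts of sufficient size so that $M_{\mu \cup \lambda} \ne \emptyset$ for every $\mu$ in the finite support of $\Phi_L$, and then, ordering the $\mu$'s by a statistic such as $(|\mu|, \ell(\mu))$, to argue by induction on this order that for some such $\lambda$ the top-degree contributions in $z$ cannot all cancel. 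The delicate point is that many individual $R^1_{A_\nu}$ vanish outright---e.g.\ $R^1_{A_m} = 0$ by Lemma~\ref{lem:k}~(5)---so the naive attempts $\lambda = \emptyset$ or $\lambda$ consisting of a single part can fail, and the combinatorial control afforded by Theorem~\ref{thm:front} is essential in ruling out accidental coincidences among the various $R^1_{A_{\mu \cup \lambda}}$.
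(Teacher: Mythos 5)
Your reverse implication is correct and is essentially the paper's argument: nonvanishing of $R^1_{L\cdot A_\lambda}$ forces the $a^{-\tb(L\cdot A_\lambda)}$ coefficient of $F_{L\cdot A_\lambda}$ to be nonzero, and the bookkeeping $\tb(L\cdot A_\lambda)=\tb(L)+w(A_\lambda)$, $F_{L\cdot A_\lambda}=a^{-w(A_\lambda)}A_\lambda F_L$ transfers sharpness back to $L$. The forward implication, however, is where the entire difficulty lives, and you have only stated a plan, not a proof. Worse, the plan is aimed at the wrong end of the polynomial. You propose to extract the \emph{top} $z$-degree of $\sum_\mu \varphi_\mu(z)R^1_{A_{\mu\cup\lambda}}(z)$ and argue by induction on a statistic of $\mu$ that the leading contributions cannot all cancel. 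But the $\varphi_\mu$ are arbitrary Laurent polynomials with unrelated top degrees, so the top degree of each product $\varphi_\mu R^1_{A_{\mu\cup\lambda}}$ shifts by $\deg_z\varphi_\mu$; there is no single clean pairing to analyze, and ``cannot all cancel'' is exactly the assertion that needs proof. Nothing in your sketch rules out cancellation among terms whose top degrees happen to coincide.

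The paper's mechanism is to look at the \emph{bottom} instead: let $k$ be the minimal $z$-degree appearing in any $\varphi_\mu$. Since each $R^1_{A_\nu}$ is a genuine polynomial in $z$ with no negative powers (products of basic fronts have no cusps, so $j(\rho)=\#\mbox{switches}\geq 0$), the coefficient of $z^k$ in $R^1_{L\cdot A_\lambda}$ is exactly $\sum_\mu a_\mu C(\mu\cdot\lambda)$, where $a_\mu=\varphi_\mu[z^k]$ and $C(\nu)=R^1_{A_\nu}(0)$ counts \emph{switchless} rulings. This $C(\nu)$ has a closed form, $\prod_k k^{a_k}(2a_k-1)!!$ for $\nu=1^{2a_1}\cdots n^{2a_n}$ and $0$ otherwise, which factors over part sizes; the needed statement then becomes nonsingularity of the integer matrix $\bigl(C(\mu\cdot\lambda)\bigr)_{\mu,\lambda}$, which decomposes as a tensor product of one-part-size matrices, each reducible by row operations to a Vandermonde matrix. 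This is the linear-independence argument your proposal defers, and without it (or a worked-out substitute) the forward direction is not established. If you want to salvage your approach, switch from leading to constant terms so that the pairing $B(u,v)=\Psi(uv)$ you mention degenerates to the computable integer pairing $C(\mu\cdot\lambda)$, and then prove nonsingularity rather than merely asserting non-cancellation.
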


\begin{proof}{(Theorem~\ref{thm:sharp})}
One direction is straightforward. Suppose that, for some $\lambda$, $L' = L \cdot A_\lambda$ has a normal ruling. Then the ruling polynomial of $L'$ is nontrivial, so the coefficient of $a^{-tb(L)}$ is nonzero. Therefore $tb(L') \ge - \deg_a F_{L'}$ which, combined with the inequality (\ref{eq:CGEst}), gives us an equality $\tb(L') = - \deg_a F_{L'}$. However, $tb(L') = tb(L \cdot A_\lambda) = tb(L) + w(A_\lambda) $, since $A_\lambda$ has no cusps. In addition, $D_{L'} = A_\lambda \cdot D_L$, so $F_{L'} = a^{-w(A_\lambda)}A_\lambda \cdot F_L$, and we compute
\[
-\deg_a F_L = -w(A_\lambda) - \deg_a(F_{L'}) = -w(A_\lambda) + \tb(L') = \tb(L).
\]

Now suppose $tb(L) = -\deg_a F_L$.  We will find a $\lambda$ such that $L \cdot A_\lambda$ has a normal ruling.

Let $\sum_{\mu} p_\mu(z) A_\mu$ be the coefficient of $a^{-tb(L)}$ in $F_L$, where the $p_\mu(z)$ are polynomials in $z$ and $z^{-1}$. This coefficient is nonzero, or else the degree equality would not hold, so $p_\mu(z) \ne 0$ for at least one $\mu$. Let $k$ be the smallest integer such that at least one $p_\mu$ has a nonzero coefficient of $z^k$.

By Theorem~\ref{thm:spec}, the ruling polynomial of $L \cdot A_\lambda$ is 
$$\sum_\mu p_\mu(z) R_{A_\mu A_\lambda}(z).$$
We will prove that for some $\lambda$, this polynomial is nonzero (and therefore a normal ruling exists) by looking at the $z^k$ coefficient of this polynomial. Since $R_{A_\mu A_\lambda}(z)$ is a polynomial in $z$ with no terms of $z^{-1}$ or lower degree, the only way to get a $z^k$ coefficient is from the product of $p_\mu(z)[z^k]$ and $R_{A_\mu A_\lambda}(z)[z^0]$ for some $\mu$ (here, $f(z)[z^i]$ denotes the coefficient of $z^i$ in $f(z)$). Denote $p_\mu(z)[z^k]$ by $a_\mu$, and $R_{A_\mu}(z)[z^0]$ (which is the number of switchless rulings of $A_\mu$) by $C(\mu)$.

The quantity $C(\mu)$ is easy to calculate. Without switches, each component of size $k$ must simply be paired with another component of size $k$ in one of $k$ ways. In particular, this is only possible if there is an even number of each component size. Define the double factorial $(2k-1)!! = (2k-1)(2k-3)(\cdots)(3)(1) = (2k)!/(2^k k!)$. This counts the number of ways to divide $2k$ objects into pairs. It is clear that
$$
C(\mu) = \begin{cases}
\prod_{k=1}^n k^{a_k} (2a_k-1)!! & \mbox{if }\mu = 1^{2a_1} 2^{2a_2} \dots n^{2a_n} \\
0 & \mbox{else}
\end{cases}
$$

We wish to prove that for some $\lambda$, $\sum_{\mu} a_\mu C(\mu \cdot \lambda) \ne 0$. Here, if $$\mu = 1^{a_1} 2^{a_2} \dots n^{a_n}\mbox{ and }\lambda = 1^{b_1} 2^{b_2} \dots n^{b_n},$$ we will denote by $\mu \cdot \lambda$ the partition $$1^{a_1 + b_1} 2^{a_2 + b_2} \dots n^{a_n + b_n}.$$

Let $M$ be the collection of all partitions such that 
\begin{enumerate}
\item The parts of the partition are all no larger than $n$, for some $n$.
\item Parts of each size occur between $0$ and $2m-1$ times, for some $m$.
\end{enumerate}
We choose the parameters $m$ and $n$ such that we include all partitions $\mu$ with $a_\mu \ne 0$. 

Let $V$ be a $n^{2m}$-dimensional real vector space with basis vectors $e_{\lambda}$ for $\lambda \in M$. For each $\mu \in M$, consider the following vectors in $V$:

$$v_\mu = \sum_{\lambda \in M} C(\mu \cdot \lambda) e_\lambda.$$

We will show that these vectors also form a basis of $V$, and are therefore linearly independent. From there, observe that 
$$\sum_{\lambda \in M} \left(\sum_{\mu \in M} a_\mu C(\mu \cdot \lambda) \right) e_\lambda 
= \sum_{\mu \in M} a_\mu \left(\sum_{\lambda \in M} C(\mu \cdot \lambda) e_\lambda \right)
= \sum_{\mu \in M} a_\mu v_\mu.$$
If the coefficients $a_\mu$ on the right are not all $0$, then because the $v_\mu$ are linearly independent the resulting sum is a nonzero vector of $V$. Therefore the coefficients in terms of $e_\lambda$ are not all $0$ as well -- i.e. for some $\lambda$, $\sum_{\mu} a_\mu C(\mu \cdot \lambda) \ne 0$. So once we have the result of linear independence, we are done.

From the formula for $C(\mu)$, it's easy to calculate that $C(\mu \cdot \lambda)$ can be written as a product of $C(k^{a_k} \cdot k^{b_k})$, over all $k$, where $a_k$ and $b_k$ are the number of parts of size $k$ in $\lambda$ and $\mu$ respectively. Suppose we write $V$ as the tensor product $\bigotimes_{i=1}^n \R^{2m}$, identifying the basis vector $e_{j_1} \otimes e_{j_2} \otimes \cdots \otimes e_{j_n}$ on the left with the basis vector $e_\lambda$ on the right, where $\lambda = 1^{j_1} 2^{j_2} \cdots n^{j_n}$. Here we use a slightly non-standard basis of $\R^{2m}$: it is $0$-indexed and consists of $\{e_0, e_1, \dots, e_{2m-1}\}$, for ease of notation. 

Then if $\mu = 1^{a_1} 2^{a_2} \cdots n^{a_n}$,
\begin{align*}
v_\mu &= \sum_{\lambda \in M} C(\mu \cdot \lambda) e_\lambda \\
 &= \sum_{1^{b_1} \cdots n^{b_n} \in M}  \left( \prod_{i=1}^n C(i^{a_i} \cdot i^{b_i}) \right) \left( \bigotimes_{i=1}^n e_{b_i}\right) \\
 &= \sum_{1^{b_1} \cdots n^{b_n} \in M}  \left( \bigotimes_{i=1}^n  C(i^{a_i} \cdot i^{b_i}) e_{b_i} \right) \\
 &= \bigotimes_{i=1}^n \left( \sum_{j=0}^{2m-1} C(i^{a_i} \cdot i^j) e_j \right).
\end{align*}

Therefore, rather than prove that the vectors $v_\mu$ are a basis of $V$, it suffices to prove that the vectors $u_k = \sum_{j=0}^{2m-1} C(i^{k} \cdot i^j) e_j$, as $k$ goes from $0$ to $2m-1$, are a basis of $\R^{2m}$.
There are three simplifying observations to be made:
\begin{enumerate}
\item $C(i^{k} \cdot i^j) = 0$ if $k \not\equiv j \pmod{2}$. Therefore $u_k$ is a linear combination only of the odd-indexed $e_j$ if $k$ is odd, and only of the even-indexed $e_j$ if $k$ is even. 
Furthermore, $C(i^k \cdot i^j) = C(i^{k-1} \cdot i^{j+1})$, so $u_{2k}$ and $u_{2k-1}$ have the same coefficients, just shifted over by one index. As a result, we will only show the independence of the vectors $u_0, u_2, \dots, u_{2m-2}$ -- the result for $u_1, u_3, \dots, u_{2m-1}$ is similar.

\item By the first observation, we have 
$$u_{2k}= \sum_{j=0}^{m-1} C(i^{2k} \cdot i^{2j}) e_{2j} = i^k \sum_{j=0}^{m-1} C(1^{2k} \cdot 1^{2j}) (i^j e_{2j}).$$
This corresponds to starting in the case where $i=1$, then scaling both the $u_{2k}$ and the $e_{2j}$ by powers of $i$ -- a scaling which doesn't change the question of linear independence one way or the other. Therefore it suffices to consider the case where $i=1$.

\item Finally, we can scale each $u_{2k}$ by $C(1^{2k})$ (which, too, doesn't affect linear independence). Now we want to look at 
$$u_{2k}' = \sum_{j=0}^{m-1} C(1^{2k} \cdot 1^{2j})/C(1^{2k}) e_{2j} = \sum_{j=0}^{m-1} \left(\prod_{\ell=1}^j (2k+2\ell-1)\right) e_{2j}.$$
\end{enumerate}

If we put the coefficients of $u_{2k}'$ as columns of a matrix, (i.e. $j$ indexes the rows and $k$ indexes the columns), we get
$$
\begin{pmatrix}
1 & 1 & \dots & 1 \\
1 & 3 & \dots & 2m-1 \\
1\cdot 3 & 3\cdot 5 & \dots & (2m-1)(2m+1) \\
\vdots & \vdots & \ddots & \vdots \\
1\cdot 3 \cdots (2m-1) & 3 \cdot 5 \cdots (2m+1) & \dots & (2(m-1)+1)(\cdots)(4(m-1)-1)
\end{pmatrix}
$$
Here, the entries in the $j$-th row are given by $f_j(k) = \prod_{\ell=1}^j (2k+2\ell-1)$, which is a degree $j$ polynomial function. In particular, $f_j(k)$ can be written as $(2k)^j$ plus lower-order terms; these lower-order terms are necessarily a linear combination of $f_1(k), \dots, f_{j-1}(k)$. Therefore, we can use row operations to eliminate the lower-order terms, so that the resulting matrix is:
$$
\begin{pmatrix}
1 & 1 & \dots & 1 \\
1 & 2 & \dots & m \\
1 & 4 & \dots & m^2 \\
\vdots & \vdots & \ddots & \vdots \\
1 & 2^{m-1} & \dots & m^{m-1}
\end{pmatrix}
$$
This is a Vandermonde matrix whose determinant is $\prod_{j \ne k} (j -k)$, which is nonzero. Therefore the vectors $u_{2k}'$ (and $u_{2k}$) form a basis of $\R^{2m}$, which completes the proof.
\end{proof}

\subsection{The $2$-graded case and the HOMFLY-PT estimate}  A similar approach applies in the case of the HOMFLY-PT polynomial, $P_L$.  The proof of the  reverse implication is identical.  For the forward implication, we suppose $\tb(L) = -\deg_aP_L$ and consider the coefficient of the lowest power $z^k$ that appears in the $a^{-\tb(L)}$ term of $P_L$,
 \[
 \sum_{\alpha, \beta} b_{(\alpha, \beta)} A_\alpha A_{-\beta}.
 \]
 Fix parameters $m$ and $n$ so that the set 
 \[
 M= \left\{ (\mu, \nu) \,|\, \mu = 1^{a_1}\cdots n^{a_n}, \, \nu= 1^{b_1}\cdots n^{b_n}, \, 1\leq a_i, b_i\leq m \right\}
 \]
contains all $(\alpha, \beta)$ such that $b_{(\alpha, \beta)} \neq 0$.

Using Theorem 6.3 in \cite{R2}, for any $(\mu, \nu) \in M$ the coefficient of $z^k$ in the $2$-graded ruling polynomial of $L \cdot A_\mu A_{-\nu}$ is given by
\[
  \sum_{\alpha, \beta} b_{(\alpha, \beta)} R^2_{A_{\alpha\cdot\mu} A_{-\beta\cdot\nu}}(0).
\]
It suffices to show that the coefficient matrix 
\[\displaystyle
A= \left(R^2_{A_{\alpha\cdot\mu} A_{-\beta\cdot\nu}}(0)\right)_{(\alpha, \beta), (\mu, \nu) \in M}
\]
is non-singular.  Writing $\alpha =1^{a_1}\cdots n^{a_n}$, $\beta= 1^{b_1} \cdots n^{b_n}$, $\mu = 1^{c_1} \cdots n^{c_n}$, $\nu = 1^{d_1}\cdots n^{d_n}$, one has
\[
R^2_{A_{\alpha\cdot\mu} A_{-\beta\cdot\nu}}(0) = \prod_{k=1}^n \delta_{a_k +c_k, b_k+d_k} k^{a_k+c_k}(a_k + c_k)!.
\]
Thus, $A$ is a tensor product (Kronecker product) of matrices 
\[
A_k= \left( \delta_{a+c, b+d} k^{a+c} (a+c)!\right)_{(a,b), (c,d)}.
\]
Due to the Kronecker delta, each $A_k$ is a direct sum (block matrix) of matrices $B_l,$ $ l \in \Z\cap[-n,n]$ obtained from keeping rows and columns satisfying $a-b = d-c = l$.  

The proof is completed by showing that each $B_l$ is non-singular.  We treat the case $l \geq 0$ as $l <0$ is similar.  Then, $ l \leq a,d \leq n$ and $B_l = \left( k^{a+d-l}(a+d-l)!\right)$.  Dividing rows and columns by $k^{a-l}$ and $k^{d}\cdot d!$ respectively leaves
\[
\left((a+d-l)!/d! \right) = \left( f_a(d) \right),
\] 
where $f_a(x) = \prod_{j=1}^{a-l} (j +x)$ is a polynomial of degree $a-l$.  Elementary row operations reduce this to a non-singular Vandermonde matrix.

\section{Augmentations and generalized normal rulings}

In this final section we complete the proof of Theorem \ref{the:Main} by establishing the following:

\medskip

For any Legendrian link $L \subset J^1(S^1)$ with $\Z/p$-graded Maslov potential, $\mu$, the following are equivalent: 
\begin{itemize}
\item[{\bf (A)}] The Chekanov-Eliashberg algebra, $(\mathcal{A}(L), d)$, admits a $p$-graded augmentation.
\item[{\bf (B)}] The front projection of $L$ has a $p$-graded generalized normal ruling.
\end{itemize}

\medskip

We begin by briefly recalling the aspects of the Chekanov-Eliashberg DGA that are important for the proof.  The reader is refered to \cite{NgTr} for the original, more detailed treatment of this DGA in the $J^1(S^1)$ setting.

Given a Legendrian knot or link, $L \subset J^1(S^1)$, the {\it Lagrangian projection}, $\pi_{xy}(L)$, of $L$ to the $xy$-annulus is an immersed curve.  The Chekanov-Eliashberg DGA, $(\A(L), d)$, is a graded algebra, $\A(L)$, with a degree $-1$ differential, $d$, defined via a generic Lagrangian projection of $L$.  

After a small Legendrian isotopy, we may assume $\pi_{xy}(L)$ to have only finitely many transverse double points which we label as $q_1, \ldots, q_n$.
Then, the algebra, $\A(L)$, is the free associative $\Z/2$-algebra with unit generated by the double points $q_1, \ldots, q_n$.  The set of monic non-commutative monomials in the $q_i$ forms a linear basis for $\A(L)$.  If $L$ is connected, then $\A(L)$ has a $\Z/(2 r(L))$ grading.  In general, the grading depends on a choice of Maslov potential for $L$.  The differential, $d$, is defined by counting certain immersed discs in the $xy$-annulus with boundary mapped to the Lagrangian projection of $L$.  

\begin{definition}  An {\it augmentation} of $(\A(L), d)$ is an algebra homomorphism $\varepsilon: \A(L) \rightarrow \Z/2$ satisfying
\begin{itemize}
\item[(i)] $\varepsilon(1) = 1$, and
\item[(ii)] $\varepsilon \circ d = 0$.
\end{itemize}
In addition, $\varepsilon$ is {\it $p$-graded} if $\varepsilon(q_i) \neq 0$ implies $|q_i| = 0 $ mod $p$.
\end{definition}

The existence of an augmentation of $(\A(L), d)$ is a property that is invariant under Legendrian isotopy.  This follows from the fact that the ``stable tame isomorphism type'' (see \cite{Ch}, \cite{NgTr}) of $(\A(L), d)$ is unchanged by a Legendrian isotopy.  Therefore, in establishing the equivalence of {\bf (A)} and {\bf (B)} we may work with the Chekanov-Eliashberg algebra of a Legendrian isotopic link $L'$.  The links $L'$ which we will consider have a standard form so that $(\A(L'), d)$ may be described in a formulaic manner from the front projection of $L'$ (and this front projection is combinatorially the same as that of $L$).  For this reason we do not present the differential or the grading of the Chekanov-Eliashberg DGA in full generality here.

\subsection{The DGA of a resolved front diagram with splashes}

Given a Legendrian $L \subset J^1(S^1)$ we begin by modifying the front diagram of $L$ via (a slight variation of) the resolution technique of \cite{NgTr}.  Beginning near $x=0$ and working from left to right, we alter the front projection of $L$ by an isotopy in the $xz$-annulus as follows.  We arrange so that, except for intervals  near $x=1$ or immediately prior to a crossing or right cusp, the slopes of the strands are constant and strictly decreasing as we move from the top to bottom.  Further, we will assume that all strands usually have non-positive slope.  It is no problem to produce these conditions after a left cusp, but with crossings and right cusps the slopes of the two relevant strands will need to be interchanged prior to the crossing or cusp.  As the $y$-coordinate is given by the slope $\frac{dz}{dx}$, this has the effect of producing double points on the Lagrangian projection corresponding to (but located to the left of) the crossings and right cusps of the front projection of $L$.  Finally, when we near $x=1$ the strands have become very spread out and moved below their original $z$ values at $x=0$.  Beginning with the top strand and then proceeding successively to the lowest strands, we return each strand back to its initial position via a steep upward step.  This creates several new crossings on the Lagrangian projection.  See Figure \ref{fig:res}.  

\begin{figure} 
\centerline{\includegraphics{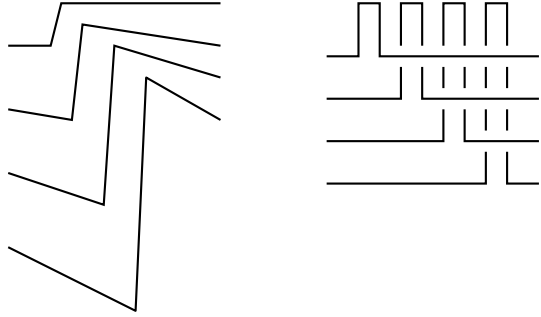}}
\caption{The front projection (left) and Lagrangian projection (right) of $L'$ in an interval immediately to the left of $x=1$.}
\label{fig:res}
\end{figure}

Next, we add ``splashes''.  Recall that we view the $S^1$ factor of $J^1(S^1)$ as $[0, 1]$ with $0$ and $1$ identified.  In a similar notation to Section 2, we let $0 = x_0 < x_1 < \cdots < x_M =1 $ be a partitioning of the interval $[0,1]$ such that no $x_m$ coincides with the $x$-coordinate of a crossing or cusp and each interval $(x_{m-1}, x_m)$ contains exactly one crossing or cusp.  For each $m = 1, \ldots, M-1$, we add a miniature version of the steps appearing in the part of the resolution procedure near $x=1$ into a small interval centered at $x_m$.  That is, beginning at the top strand and then working downward add a brief but steep (smooth) upward step into the diagram.  This has a minimal effect on the front projection but alters the Lagrangian projection at each $x_m$ by replacing what had been several parallel lines with a collection of crossings  similar to those pictured in the right half of Figure \ref{fig:res}.  Denote the Legendrian link resulting from the combination of these two procedures as $L'$.

We now give a complete description of the Chekanov-Eliashberg DGA of $L'$.  For each $1 \leq m \leq M$, let $N(m)$ denote the number of intersection points of $L$ with the plane $x=x_m$.  The generators of $\mathcal{A}(L')$ come from two sources.  First, we have generators corresponding to the crossings and right cusps of the front projection of $L$ via the resolution procedure.  In addition, for each $1 \leq m \leq M$ we have two upper triangular matrices worth of generators, $x^m_{ij}$ and $y^m_{ij}$ with $ 1 \leq i < j \leq N(m)$.  These correspond to the double points created by the splashes and the final step of the resolution procedure.  

\subsubsection{The grading}  If $L$ is equipped with a $\Z/p$-graded Maslov potential, $\mu$, then $\mathcal{A}(L')$ is $\Z/p$-graded.  We will describe the degree $|q_i|\in \Z/p$ assigned to the generators of $\mathcal{A}(L')$; degrees then extend additively as $|x\cdot y|= |x| + |y|$.

In the following, $\mu(m , i)$ denotes the value of the Maslov potential on the $i$-th strand at $x_m$.  (As in Section 2, we label strands from top to bottom.)  The generators of $\mathcal{A}(L')$ coming from splashes have the following degrees:
\begin{equation} \label{eq:degrees}
|x^m_{ij}| = \mu(m, i) - \mu(m,j), \quad \mbox{and} \quad   |y^m_{ij}| = \mu(m,i) - \mu(m,j) -1.
\end{equation}
In addition, a crossing $b_m$ between the $k$ and $k+1$ strands occurring in the interval $(x_{m-1}, x_m)$  has $|b_m| = \mu(m, k+1) - \mu(m,k)$, and all right cusps have degree $1$.

\subsubsection{The differential} Formulas for the differential $d$, are most efficiently provided by, for each $m$, placing the generators  $x^m_{ij}$ and $y^m_{ij}$ into strictly upper triangular matrices
\[
X_m = (x^m_{ij}), Y_m = (y^m_{ij}).
\]
(Here, $x^m_{ij} = y^m_{ij} = 0$ if $i \geq j$.)  As the $x$-coordinate is $S^1$-valued, it is important to make the convention that $X_0 = X_M$ and $Y_0=Y_M$.  Then, applying the differential to each entry, we have the formulas
\begin{equation} \label{eq:diffe}
\displaystyle
\begin{array} {ccl}
d Y_m & = & (Y_m)^2 \\
d X_m & = & Y_m (I + X_m) + (I + X_{m-1}) \widetilde{Y}_{m-1}
\end{array}
\end{equation} 
with $I$ an identity matrix of the appropriate size.  The precise form of $\widetilde{Y}_{m-1}$ depends on the tangle appearing on the interval $(x_{m-1}, x_m)$ and is described presently.

Suppose that $(x_{m-1}, x_m)$ contains a crossing, $b_m$, between the strands labeled $k$ and $k+1$.  Then, 
\[
d b_m = y^{m-1}_{k, k+1},\]
and
\[
\widetilde{Y}_{m-1} = B_{k,k+1} \widehat{Y}_{m-1} B^{-1}_{k, k+1}
\]
where  $ B_{k,k+1}$ (resp.  $B^{-1}_{k,k+1}$) agrees with the identity matrix except for a $2\times 2$ block $\left[\begin{array}{cc} 0 & 1 \\ 1 & b_m \end{array}\right]$ (resp. $\left[\begin{array}{cc} b_m & 1 \\ 1 & 0 \end{array}\right]$) along the diagonal in rows $k$ and $k+1$ and $\widehat{Y}_{m-1}$ is the matrix $Y_{m-1}$ with the entry $y^{m-1}_{k,k+1}$ replaced with $0$.

Next, we suppose  $(x_{m-1}, x_m)$ contains a single left cusp between the strands labeled $k$ and $k+1$ at $x_m$.  Then,
\[
\widetilde{Y}_{m-1} =J_k Y_{m-1} J^{\mathrm{T}}_{k} + E_{k,k+1}
\]
where $J_k$ is the $N(m) \times N(m)$ identity matrix with columns $k$ and $k+1$ removed and $E_{k,k+1}$ is a matrix with a single non-zero entry in the $k,k+1$ position.

Finally, we suppose  $(x_{m-1}, x_m)$ contains a single right cusp, $c_m$, between the strands labeled $k$ and $k+1$ at $x_{m-1}$.  Then, 
\[
d c_m = 1 + y^{m-1}_{k, k+1},
\]
and the matrix $\widetilde{Y}_{m-1}$ is most easily described entry by entry.  Let $\tau: \{ 1, \ldots, N(m) \} \rightarrow \{ 1, \ldots, N(m-1) \}$, be given by $\tau(i) = \left\{\begin{array}{lr} i, & i <k \\ i+2, & i \geq k\end{array}\right.$.  The $i, j$ entry of $\widetilde{Y}_{m-1}$ is given by 
\[
\widetilde{y}^{m-1}_{ij} = y^{m-1}_{\tau(i), \tau(j)} + a_{ij} \quad \mbox{where}
\]
$a_{ij} = y^{m-1}_{i, k+1} y^{m-1}_{k, \tau(j)} + y^{m-1}_{i, k} c_m y^{m-1}_{k, \tau(j)}  + y^{m-1}_{i, k+1} c_m y^{m-1}_{k+1, \tau(j)} + y^{m-1}_{i, k+1} (c_m)^2 y^{m-1}_{k+1, \tau(j)}$ when $i < k \leq j$ and $a_{ij} =0$ otherwise.

\begin{remark}  The technique of adding some variation of splashes to simplify the differential has been used in several places in the literature.  The version employed here is the same as that of \cite{FR} where we refer the reader for more details.  For an alternate approach, we expect that a DGA of the same form would arise from iterating the ``bordered Chekanov-Eliashberg algebra'' construction introduced in \cite{Si}.
\end{remark}

\subsection{Proof of Theorem \ref{the:Main}~(2)}

We begin by introducing a notation.  Given an involution $\tau$ of $\{1, \ldots, N\}$, $\tau^2 = \mathit{id}$, we let $B_\tau = (b_{ij})$ denote the $N \times N$ matrix with entries 
\[
b_{ij} = \left\{ \begin{array}{lc} 1, & \mbox{if } i < \tau(i) = j 
\\ 0, & \mbox{else} 
\end{array} \right..
\]

\subsubsection{{\bf (B)} $\Rightarrow$ {\bf (A)}}  Suppose that $L$ admits a generalized normal ruling, $\rho = (\rho_1, \ldots, \rho_m)$.  An augmentation $\varepsilon$ of the algebra $\mathcal{A}(L')$ is defined as follows:  on all right cusps, $c_m$, $\varepsilon(c_m) =0$; at crossings $b_m$, $\varepsilon(b_m)$ is $1$ if $b_m$ is a switch and $0$ otherwise; for all $m$, $\varepsilon(Y_m) = B_{\rho_m}$; and $\varepsilon(x^m_{i,j}) =0 $ for all $i,j$ except when a switch occurs between $x_{m-1}$ and $x_m$.  Assume the switch involves the $k$ and $k+1$ strands.  If one of the switching strands is also a fixed point strand, then of the generators $x^m_{ij}$ augment only $x^m_{k,k+1}$.  Else, note that due to the normality condition, near the switch the intervals connecting the switching strands and their companion strands (Remark \ref{rem:AltNR}) are either disjoint or nested.  Assume that the switch occurs between the strands labeled $k$ and $k+1$.  If the switch is disjoint, then augment only $x^m_{k,k+1}$.  If the switch is nested, then augment $x^m_{k,k+1}$ as well as $x^m_{\tau(k), \tau(k+1)}$ (resp. $x^m_{\tau(k+1), \tau(k)}$) if $\tau(k) < \tau(k+1)$ (resp. $\tau(k+1) < \tau(k)$).

It is straight forward to verify from the formulas of the previous section that $\varepsilon$ is an augmentation.  If $\rho$ is $p$-graded with respect to a Maslov potential $\mu$, then  $\varepsilon$ is as well.

\subsubsection{{\bf (A)} $\Rightarrow$ {\bf (B)}}
The proof of the reverse implication is based on some canonical form results from linear algebra due to Barannikov \cite{B}.  

\begin{definition}  An $M$-complex, $(V, \mathcal{B}, d)$ is a vector space $V$ over a field $\mathbb{F}$ with a chosen ordered basis $\mathcal{B} = \{v_1, \ldots, v_N\}$ together with a differential $d: V\rightarrow V$, $d^2=0$, of the form $\displaystyle d v_i = \sum_{i<j} c_{ij} v_j$.
\end{definition}

\begin{proposition} \label{prop:SpecBasis} If $(V, \mathcal{B}, d)$ is an $M$-complex, then there exists a triangular change of basis $\{\widetilde{v}_1, \ldots, \widetilde{v}_N \}$, $\displaystyle \widetilde{v}_i = \sum_{i\leq j} a_{ij} v_j$, and an involution $\tau: \{1, \ldots, N\} \rightarrow \{1, \ldots, N \}$ such that
\[
d \widetilde{v}_i= \left\{ \begin{array}{lc} \widetilde{v}_j, & \mbox{if } i < \tau(i) = j 
\\ 0, & \mbox{else} 
\end{array} \right..
\]
Moreover, the involution $\tau$ is unique.
\end{proposition}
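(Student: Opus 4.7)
The plan is to prove existence by induction on $N = \dim V$, and then derive uniqueness of $\tau$ from filtered homology invariants that are manifestly basis-independent.

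For existence, I would use that $W = \op{span}(v_2, \ldots, v_N)$ is automatically a subcomplex of $V$, since $dv_i \in \op{span}(v_j : j > i) \subset W$ for every $i \geq 2$. So $W$ is itself an $M$-complex of smaller dimension, and the inductive hypothesis furnishes $\widetilde v_2, \ldots, \widetilde v_N$ and an involution $\tau'$ on $\{2, \ldots, N\}$ giving the canonical form on $W$. The task then reduces to incorporating $\widetilde v_1$ into the picture, possibly after revising one of the $\widetilde v_j$'s.

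The main computation is to expand $dv_1$ in the new basis. Since $d^2 v_1 = 0$, this element is closed in $W$, and in the canonical form on $W$ the closed vectors are precisely the $\widetilde v_j$ with $\tau'(j) = j$ together with the $\widetilde v_j$ with $\tau'(j) < j$; the latter are $d$-exact. So I would write $dv_1 = \sum_{j\text{ fixed}} \gamma_j \widetilde v_j + \sum_{\tau'(j) < j} \gamma_j \widetilde v_j$ and subtract $\sum_{\tau'(j) < j} \gamma_j \widetilde v_{\tau'(j)}$ from $v_1$; each $\widetilde v_{\tau'(j)}$ lies in $\op{span}(v_2, \ldots, v_N)$, so this modification preserves the upper-triangular shape of $\widetilde v_1$ and it kills the exact part of $d\widetilde v_1$. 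After this reduction, $d\widetilde v_1 = \sum_{j\text{ fixed}} \gamma_j \widetilde v_j$. If the sum is zero, declare $\tau(1) = 1$; otherwise let $k$ be the smallest fixed-point index of $\tau'$ with $\gamma_k \ne 0$, replace the old $\widetilde v_k$ by $d\widetilde v_1$ (a legitimate triangular change of basis, since the new coefficient of $v_k$ in the resulting vector is $\gamma_k$ times the previous nonzero one), and set $\tau(1) = k$, $\tau(k) = 1$. The crucial observation is that because $k$ was a fixed point of $\tau'$, no other $\widetilde v_i$ had $d\widetilde v_i$ involving the old $\widetilde v_k$, so the substitution disturbs no previously established pairing; this is the main obstacle of the argument and is precisely what forces the choice of $k$ from among the fixed points of $\tau'$.

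For uniqueness, I would recover $\tau$ from the intrinsic filtration $F_k = \op{span}(v_k, \ldots, v_N) = \op{span}(\widetilde v_k, \ldots, \widetilde v_N)$. A direct calculation in any canonical basis gives $\dim H(F_k) = \#\{j \geq k : \tau(j) = j\}$, so the fixed points of $\tau$ are read off from the jumps of $\dim H(F_k)$. For the pairs, one computes $\dim H(F_k/F_l) = (l - k) - 2\,\#\{(i,j) : k \le i < j < l,\, \tau(i) = j\}$, and a standard persistence-style difference argument then extracts each value $\tau(i)$ from how these counts change as $k$ and $l$ vary. Since the filtered homology dimensions depend only on $(V, \mathcal{B}, d)$ and not on the chosen basis, the involution $\tau$ is forced.
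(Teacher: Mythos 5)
The paper does not actually prove this proposition---it is quoted from Barannikov and the authors refer to Lemma~2 of \cite{B}---so your write-up supplies an argument the paper delegates to the literature. Your existence proof is correct and is essentially the standard induction behind Barannikov's lemma: $W=\op{span}(v_2,\ldots,v_N)$ is a subcomplex because $d$ strictly increases the index, $dv_1$ is a cycle in $W$, subtracting $\sum_{\tau'(j)<j}\gamma_j\widetilde v_{\tau'(j)}$ from $v_1$ kills its exact part while preserving triangularity, and the two points you isolate are exactly the ones that need checking: the replacement of $\widetilde v_k$ by $d\widetilde v_1$ is triangular because $k$ is the \emph{minimal} surviving index (so the coefficient of $v_k$ is $\gamma_k a_{kk}\neq 0$), and no previously established pairing is disturbed because $k$ was a fixed point of $\tau'$, so no $\widetilde v_i$ had $d\widetilde v_i=\widetilde v_k$.

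Your uniqueness argument is sound in strategy but contains one wrong formula. The relative computation $\dim H(F_k/F_l)=(l-k)-2\,\#\{(i,j): k\le i<j<l,\ \tau(i)=j\}$ is correct, and the double difference in $k$ and $l$ already determines every pair $\{i,\tau(i)\}$ with $i<\tau(i)$; the fixed points are then forced as the unpaired indices, so no separate invariant is needed for them. However, the absolute formula $\dim H(F_k)=\#\{j\ge k:\tau(j)=j\}$ is false: a generator $\widetilde v_j$ with $\tau(j)<k\le j$ is closed in $F_k$ but its primitive $\widetilde v_{\tau(j)}$ lies outside $F_k$, so it too survives to homology. (Take $N=2$, $dv_1=v_2$: then $\dim H(F_2)=1$ although $\tau=(1\,2)$ has no fixed points.) The correct count is $\#\{j\ge k:\tau(j)=j\}+\#\{j\ge k:\tau(j)<k\}$, so ``reading off the fixed points from the jumps of $\dim H(F_k)$'' as stated would give wrong answers. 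Delete that step, or replace it by the observation that the pairs determine the fixed points; with that repair the proof is complete.
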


\begin{remark}\label{rem:MatFunc}  

(i)  Suppose in addition that the basis elements $v_i$ are assigned degrees $|v_i| \in \Z/p$ so that $V$ is $\Z/p$-graded and $d$ has degree $-1$.  Then, the change of basis may be assumed to preserve degree.  Hence, if $i < \tau(i) = j$, then  $|v_i| = |v_j|+1$.  

(ii) The classes $[\widetilde{v}_i]$ such that $\tau(i) =i$ form a basis for the homology, $H(V,d)$.

(iii) Proposition \ref{prop:SpecBasis} has the following matrix interpretation:  There is a unique function, $D \mapsto \tau(D)$ which assigns to every  strictly upper-triangular $N\times N$ matrix, $D$, with $D^2 =0$ an involution $\tau= \tau(D)$ such that there exists an invertible upper-triangular matrix $P$ so that $PDP^{-1} = B_{\tau}$.  Notice that the uniqueness assertion implies that $\tau(QDQ^{-1}) = \tau(D)$ if $Q$ is non-singular and upper triangular.  
\end{remark}

\begin{proposition}[\cite{B}]  \label{prop:BLemma} Suppose that $(V, \mathcal{B}, d)$ is an $M$-complex, and $k \in \{1, \ldots, N\}$ is such that $d v_k = \sum_{k+1 < j} c_{kj} v_j$ so that $(V, \mathcal{B}', d)$ with $\mathcal{B}' = \{ v_1, \ldots, v_{k+1}, v_k, \ldots, v_N \}$ is also an $M$-complex.  
Then, the associated involutions $\tau$ and $\tau'$ are related as follows.  
\begin{enumerate}
\item It is always possible to have $\tau' = (k \,\, k+1) \circ \tau \circ (k \,\, k+1)$ where $(k \,\, k+1)$ denotes the transposition.

\item In the following cases, it is also possible to have $\tau' = \tau$:

\begin{enumerate}
\item If  $\tau(k+1) < \tau(k) < k < k+1$, $\tau(k) < k < k+1 < \tau(k+1)$, or $k < k+1 < \tau(k+1) < \tau(k)$.
\item If $\tau(k) < k < k+1 = \tau(k+1)$ or $\tau(k) = k < k+1 < \tau(k+1)$.
\item If $\tau(k) =k < k+1 = \tau(k+1)$.

\end{enumerate}

\end{enumerate}
\end{proposition}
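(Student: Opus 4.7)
My plan is to reformulate the proposition in matrix terms and then proceed by constructing, for each assertion, specific representatives of the upper-triangular similarity class of $B_\tau$. By Remark~\ref{rem:MatFunc}(iii), the involutions $\tau = \tau(D)$ and $\tau' = \tau(D')$ are uniquely determined by the matrices $D, D'$ of $d$ in the two bases. Since $\mathcal{B}'$ is obtained from $\mathcal{B}$ by transposing $v_k, v_{k+1}$, one has $D' = QDQ$ where $Q$ is the permutation matrix of $\sigma := (k\,k+1)$, and the hypothesis $dv_k \in \operatorname{span}(v_{k+2}, \ldots, v_N)$ becomes $D_{k,k+1}=0$. Since this entry is scaled by a nonzero factor under any upper-triangular conjugation, it follows that $(B_\tau)_{k,k+1}=0$ and hence $\tau(k)\neq k+1$ throughout the proof. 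I interpret the ``possible'' statements in (1) and (2) as existence claims: the aim is to exhibit $D$ in the $\tau$-similarity class realizing the indicated $\tau'$.

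For part~(1), I would take $D = B_\tau$, which is admissible because $(B_\tau)_{k,k+1}=0$. Writing $(QB_\tau Q)_{ij} = (B_\tau)_{\sigma(i),\sigma(j)}$ and comparing with $(B_{\sigma\tau\sigma})_{ij}$, the two agree on entries with $\{i,j\} \neq \{k,k+1\}$ because $\sigma$ preserves ordering off of that pair, and both vanish at the two exceptional positions precisely because $\tau(k)\neq k+1$. Hence $QB_\tau Q = B_{\sigma\tau\sigma}$, so $\tau' = \sigma\tau\sigma$ for this $D$.

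For part~(2), in each configuration of $\tau$ I would produce a different representative $D = A^{-1} B_\tau A$ with $A$ a carefully chosen unipotent upper-triangular matrix, and verify that $\tau(D')=\tau$. In case (2c) both $k$ and $k+1$ are fixed by $\tau$, so rows $k,k+1$ and columns $k,k+1$ of $B_\tau$ vanish and $D = B_\tau$ already gives $QDQ = D$. In case (2b), I would take $A = I + E_{k,k+1}$; then $D$ differs from $B_\tau$ by a single elementary-matrix correction ($E_{\tau(k),k+1}$ or $E_{k,\tau(k+1)}$ depending on the subcase), and a direct entry-wise calculation shows $QDQ = D$. In case (2a), take $A = I + cE_{k,k+1}$ for a nonzero scalar $c$; one writes out $D$, computes $D' = QDQ$, and then performs a second conjugation by $I + tE_{k,k+1}$ with $t = \pm 1/c$ followed by a diagonal rescaling to transport $D'$ back to $B_\tau$. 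The three subcases of (2a) (both partners below, disjoint across $\{k,k+1\}$, or both partners above) are handled in parallel; in each the key vanishing $D_{k,k+1}=0$ forces $E_{k,k+1}D = 0$ or its transpose analogue, dramatically simplifying the intermediate products.

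The main obstacle is the bookkeeping in (2a): the three subcases differ in signs and in which off-diagonal positions carry the corrections, and one must confirm that the secondary conjugation and diagonal rescaling do not produce entries that interact with the pairings of $\tau$ disjoint from $\{k,k+1\}$. This is manageable because the relevant corrections live in the small block of rows and columns indexed by $\{\tau(k),\tau(k+1)\}\times\{k,k+1\}$, which decouples from the rest of $B_\tau$; once this is verified, each case reduces to a short matrix computation, and assembling the cases yields the proposition.
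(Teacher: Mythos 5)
Your interpretation of ``possible'' as a pure existence claim is where the argument goes wrong. The way Proposition \ref{prop:BLemma} is used in the proof of {\bf (A)} $\Rightarrow$ {\bf (B)} --- to conclude that $\tau_m$ and $\tau_{m-1}$ ``satisfy the requirements near crossings (including the normality conditions)'' --- requires the restrictive reading: for the \emph{given} matrix $D$ (strictly upper triangular, $D^2=0$, $D_{k,k+1}=0$) the involution $\tau'=\tau(QDQ)$ is \emph{forced} to be either $(k\,k+1)\circ\tau\circ(k\,k+1)$ or $\tau$, and the second alternative, when it differs from the first, can occur \emph{only} in configurations (a), (b), (c). That exclusivity is exactly what becomes the normality condition of Definitions \ref{def:NR} and \ref{def:GNR}. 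Exhibiting one representative of the upper-triangular similarity class of $B_\tau$ realizing $\tau'=\sigma\tau\sigma$ and another realizing $\tau'=\tau$ says nothing about the arbitrary matrix $\varepsilon(Y_{m-1})$ handed to you by an augmentation: your argument does not rule out $\tau'=\tau$ outside the listed configurations, nor a third involution altogether. So the half of the proposition that the paper actually needs is missing. (The paper does not reprove the statement --- it is Lemma 4 of \cite{B}, ``proven quite directly by considering cases,'' the cases being the relative positions of $k$, $k+1$, $\tau(k)$, $\tau(k+1)$ analyzed for an \emph{arbitrary} matrix with $\tau(D)=\tau$, e.g.\ via the characterization of $\tau(D)$ through ranks of corner submatrices, which change in a controlled way when adjacent rows and columns are swapped.)

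The computations you do sketch are essentially sound as realizability checks: $QB_\tau Q=B_{\sigma\tau\sigma}$ when $\tau(k)\neq k+1$, and conjugating $B_\tau$ by $I+cE_{k,k+1}$ before swapping does produce matrices with $\tau'=\tau$ in cases (a) and (b). One incidental error: the claim that $D_{k,k+1}=0$ ``forces $E_{k,k+1}D=0$ or its transpose analogue'' is false --- $E_{k,k+1}D$ has row $k$ equal to row $k+1$ of $D$, which is nonzero whenever $\tau(k+1)>k+1$, and in the subcase $\tau(k)<k<k+1<\tau(k+1)$ neither $E_{k,k+1}B_\tau$ nor $B_\tau E_{k,k+1}$ vanishes. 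This does not sink the constructions, but the advertised simplification is not available; in any event the constructions address only the direction of the proposition that is not the one the paper relies on.
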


\begin{remark} 

(i) From the matrix perspective, Proposition \ref{prop:BLemma} puts restrictions on $\tau(P_{k,k+1} D P_{k,k+1})$ when $P_{k,k+1}$ is the permutation matrix of the transposition $(k \,\, k+1)$ and the $k, k+1$-entry of $D$ is $0$.  

(ii)  Propositions \ref{prop:SpecBasis} and \ref{prop:BLemma} are essentially the same as Lemma 2 and Lemma 4 of \cite{B}.  Proposition \ref{prop:BLemma} is proven quite directly by considering cases.  

\end{remark}

\begin{proof}[Proof of {\bf (A)} $\Rightarrow$ {\bf (B)}]

Suppose now that $\varepsilon$ is an augmentation of $\mathcal{A}(L')$.  
 
For each $m$, the matrix $\varepsilon(Y_m)$ is strictly upper triangular and satisfies 
\[
[\varepsilon(Y_m)]^2 = \varepsilon \circ d (Y_m) = 0.
\]
Letting $\tau_m = \tau( \varepsilon(Y_m) )$ as in Remark \ref{rem:MatFunc} produces a sequence, $\tau_1, \ldots, \tau_M$, with $\tau_m$ an involution of  $\{1, \ldots, N(m)\}$.  We show that $\tau = (\tau_1, \ldots, \tau_M)$ satisfies the requirements of a generalized normal ruling.
 This requires establishing that the restrictions provided by Definitions \ref{def:NR} and \ref{def:GNR} on consecutive involutions, $\tau_{m-1}$ and $\tau_m$, are satisfied.  
 
Recall that each interval $(x_{m-1}, x_m)$ contains a single crossing or cusp.

\medskip

\noindent \emph{Case} $(x_{m-1}, x_m)$ contains a {\bf left cusp}:    Then equation (\ref{eq:diffe}) and the definition of augmentation allow us to compute
\begin{equation} \label{eq:conjugate} 
\varepsilon(Y_m) = (I + \varepsilon(X_m)) \varepsilon(\widetilde{Y}_{m-1}) (I + \varepsilon(X_m))^{-1}.
\end{equation}
Using Remark \ref{rem:MatFunc} we conclude that
\[
\tau_m = \tau( \varepsilon(Y_m) ) = \tau( \varepsilon(\widetilde{Y}_{m-1}) ).
\]
The $M$-complex associated with $\varepsilon(\widetilde{Y}_{m-1})$ is related to that of $\varepsilon(Y_{m-1})$ by adding two new generators, $v_k$ and $v_{k+1}$, to $\mathcal{B}$.  The complex is the split extension of that of $\varepsilon(Y_{m-1})$ by $\op{span}\{v_k, v_{k+1}\}$ with the differential $d v_k = v_{k+1}$.  It can then be checked directly from the definition that the involutions $\tau_{m-1}$ and $\tau_m$ satisfy (2) of Definition \ref{def:NR}.

\medskip

\noindent \emph{Case} $(x_{m-1}, x_m)$ contains a {\bf right cusp}:  Let $ \mathcal{C} =( V_{m-1}, \mathcal{B} = \{v_i \,|\, i =1, \ldots, N(m-1)\}, d)$ denote the $M$-complex associated with the matrix $\varepsilon(Y_{m-1})$ by the formula
\begin{equation} \label{eq:MCMatrix} \displaystyle
d v_i = \sum_{i < j} \varepsilon(y^{m-1}_{ij}) v_j.
\end{equation}
Note that $\tau_{m-1}$ is precisely the involution associated to $\mathcal{C}$ by Proposition \ref{prop:SpecBasis}.  From $0 = \varepsilon \circ d (c_m)$ we deduce that $1 = \varepsilon( y^{m-1}_{k,k+1})$, and it follows that $\tau_{m-1}(k) = k+1$.  

Next, one observes that $\varepsilon(\widetilde{Y}_{m-1})$ is the matrix of the $M$-complex 
\[
\widetilde{\mathcal{C}} = \left(\widetilde{V}_{m-1}, \widetilde{\mathcal{B}} = \{ [v_i] \, | \, i \neq k, k+1\}, \widetilde{d}\right)
\]
where $\widetilde{V}_{m-1}$ is the quotient of $V_{m-1}$ by the subcomplex $\{v_k + \varepsilon(c_m) v_{k+1}, d(v_k + \varepsilon(c_m) v_{k+1}) \}$ and $\widetilde{d}$ is the differential induced by $d$.  If $\{ \widetilde{v}_i \}$ is a triangular change of basis for $\mathcal{C}$ satisfying the conditions of Proposition \ref{prop:SpecBasis}, then $\{ [\widetilde{v}_i] \,|\, i \neq k, k+1 \}$ will be such a basis for $\widetilde{\mathcal{C}}$, so that the involution associated with $\varepsilon(\widetilde{Y}_{m-1})$ is related to $\tau_{m-1}$ as required in (iii) of Definition \ref{def:NR}.  Finally, using Equation (\ref{eq:conjugate}), $\tau_m = \tau(\varepsilon(Y_m)) = \tau(\varepsilon(\widetilde{Y}_{m-1}))$.

\medskip

\noindent \emph{Case} $(x_{m-1}, x_m)$ contains a {\bf crossing}, $b_m$:   We have 
\[
0 = \varepsilon \circ d (b_m) = \varepsilon(y^{m-1}_{k,k+1}).
\]
  Thus, $\varepsilon(\widehat{Y}_{m-1}) = \varepsilon(Y_{m-1})$ with both matrices having $0$ as their $k, k+1$ entry.  Then, compute that 

\smallskip

\leftline{$\varepsilon(B_{k,k+1}) \varepsilon(\widehat{Y}_{m-1}) \varepsilon(B_{k,k+1}^{-1}) =$}

\smallskip

\centerline{$ P_{k,k+1} [I + \varepsilon(b_m) E_{k, k+1}] \varepsilon(Y_{m-1}) [I + \varepsilon(b_m) E_{k, k+1}] P_{k,k+1}.$}

\smallskip

Regardless of the value of $\varepsilon(b_m)$, the $k, k+1$-entry of $$[I + \varepsilon(b_m) E_{k, k+1}] \varepsilon(Y_{m-1}) [I + \varepsilon(b_m) E_{k, k+1}]$$ is $0$, so the matrix $$A = \varepsilon(B_{k,k+1}) \varepsilon(\widehat{Y}_{m-1}) \varepsilon(B_{k,k+1}^{-1})$$ is strictly upper triangular and  $\tau(A)$ is related to $$ \tau\left((I + \varepsilon(b_m) E_{k, k+1}) \varepsilon(Y_{m-1}) (I + \varepsilon(b_m) E_{k, k+1})\right)= \tau(Y_{m-1}) = \tau_{m-1}$$ as in Proposition \ref{prop:BLemma}. It follows that 
\[
\tau_m  = \tau( \varepsilon(Y_m) ) = \tau( (I + \varepsilon(X_m)) A (I + \varepsilon(X_m))^{-1} ) = \tau(A)
\]
and $\tau_{m-1}$ satisfy the requirements near crossings (including the normality conditions) of Definition \ref{def:GNR}.

\medskip

The statement that $\tau$ is $p$-graded if $\varepsilon$ is $p$-graded follows from (i) of Remark \ref{rem:MatFunc}.  As in (\ref{eq:MCMatrix}), $\varepsilon(Y_m)$ is the matrix of an $M$-complex with basis $v_1, \ldots, v_{N(m)}$ corresponding to the strands of $L$ at $x_m$.  If $\varepsilon$ is $p$-graded with respect to $\mu$, then we can assign a grading by $|v_i| = \mu(m, i)$ and the differential will have degree $-1$.

\end{proof}


\begin{thebibliography}{}



\bibitem{B}
S.~A. Barannikov, \textsl{The framed Morse complex and its invariants, Adv.
  Soviet Math.} \textbf{21} (1994), 93--115.


\bibitem{CG} S. Chmutov and V. Goryunov, \textsl{Polynomial invariants of Legendrian links and wave fronts}, Topics in Singularity Theory. V.I.Arnold's 60th Anniversary Collection.  (A.Khovanskii, A.Varchenko and V.Vassiliev, eds.), \textsl{Amer. Math. Soc. Translations} {\bf 2}, Vol. 180, (1997), AMS, Providence, RI, 25-44.

\bibitem{Ch} Yu. Chekanov, \textsl{Differential algebra of Legendrian
links, Invent. Math.} {\bf150} (2002), no. 3, 441-483


\bibitem{ChP} Yu. Chekanov and P. Pushkar', \textsl{Combinatorics of 
Legendrian Links and the Arnol'd 4-conjectures, Uspekhi Mat. Nauk}
{\bf 60} (2005), no. 1, 99-154, translated in \textsl{Russian Math. Surveys} 
{\bf 60} (2005), no. 1, 95-149.

\bibitem{C1} C. Cornwell, \textsl{ Bennequin Type Inequalities in Lens Spaces, Int. Math. Res. Not.} (2011)

\bibitem{C2} C. Cornwell, \textsl{ A polynomial invariant for links in lens spaces,} arXiv:math.GT/1002.1543v2.

\bibitem{DG} F. Ding, H. Geiges, \textsl{Legendrian helix and cable links, Comm. Contemp. Math.} {\bf 12} (2010), 487-500. 

\bibitem{F} D. Fuchs, \textsl{The Chekanov-Eliashberg invariant of Legendrian knots: Existence of augmentations, J. Geom. Phys.} {\bf 47 } (2003), no. 1, 43-65.


\bibitem{FI} D. Fuchs and T. Ishkhanov, \textsl{Invariants of Legendrian 
knots and decompositions of front diagrams, Moscow Math. J.} {\bf 4} (2003), 
no. 3, 707-717.

\bibitem{FT} D. Fuchs, S. Tabachnikov, \textsl{Invariants of Legendrian
and transverse knots in the standard contact space, Topology}, 1997,
{\bf 36}, 1025--1053. 


\bibitem{FR} D. Fuchs, D. Rutherford, \textsl{Generating families and Legendrian contact homology in the standard contact space, J. Topology} {\bf 4} (2011), no. 1, 190-226.


\bibitem{G} H. Geiges, \textsl{An introduction to contact topology}, Cambridge Studies in Advanced Mathematics, vol. 109, Cambridge University Press, Cambridge, 2008.  

\bibitem{Li} J. Licata, \textsl{Invariants for Legendrian knots in lens spaces, Comm. Contemp. Math.} {\bf 13} (2011), no. 1, 91-121.


\bibitem{NgTr} L. Ng and L. Traynor, \textsl{Legendrian solid-torus links, J. Symplectic Geom.} {\bf 2} (2005), no. 3, 411-443.


\bibitem{R2} D. Rutherford, \textsl{HOMFLY-PT polynomial and normal rulings of Legendrian solid torus links, Quantum Topology} {\bf 2} (2011), no. 2, 183-215.


\bibitem{R1}  D. Rutherford, \textsl{The Thurston-Bennequin number, Kauffman polynomial, and ruling invariants of a Legendrian link: The Fuchs conjecture and beyond, Int. Math. Res. Not.} (2006), Art. ID 78591.

\bibitem{S} J. Sabloff, \textsl{Augmentations and rulings of Legendrian 
knots, Int. Math. Res. Not.} (2005), no. 19, 1157-1180.


\bibitem{Si} S. Sivek, \textsl{A bordered Chekanov-Eliashberg algebra, J. Topology } {\bf 4} (2011), no. 1, 73-104.

\bibitem{T1} L. Traynor, \textsl{Legendrian circular helix links,  Math. Proc. Cambridge Philos. Soc.} {\bf 122} (1997),  301-314.

\bibitem{Tu} V. Turaev, \textsl{The Conway and Kauffman modules of a solid torus,  Zap. Nauchn. Sem. Leningrad. Otdel. Mat. Inst. Steklov. (LOMI)} {\bf 167} (1988) Issled. Topol. {\bf 6}, 79-89 (Russian).  English Translation: \textsl{J. Soviet Math.} {\bf 52} (1990), 2799-2805.

\end{thebibliography}
\end{document}